\theoremstyle{plain}
\newtheorem{theorem}{Theorem}
\newtheorem{lemma}[theorem]{Lemma}
\newtheorem{proposition}[theorem]{Proposition}
\theoremstyle{definition}
\newtheorem{definition}[theorem]{Definition}
\newtheorem{remark}[theorem]{Remark}
\newtheorem{example}[theorem]{Example}
\theoremstyle{remark}
\DeclareMathOperator{\Lip}{Lip}
\def\bq{\begin{eqnarray}}
\def\eq{\end{eqnarray}}
\def\bqq{\begin{eqnarray*}}
\def\eqq{\end{eqnarray*}}
\def\nn{\nonumber}
\def\minus {\backslash}
\def\R{\mathbb{R}}
\def\cF {\mathcal{F}}
\def\cR {\mathbb{R}}
\def\cN {\mathbb{N}}
\def\cL{\mathcal{L}}
\title{Kirszbraun extension on connected finite graph}
\author{Erwan Le Gruyer ~and~ Phan Thanh Viet}
\begin{document}
\maketitle

{\bf Abstract:} We prove that the tight function introduced Sheffield and Smart (2012) \cite{She} is a Kirszbraun extension. In the real-valued case we prove that Kirszbraun extension is unique. Moreover, we produce a simple algorithm which calculates efficiently the value of Kirszbraun extension  in polynomial time. \\\\
{\bf Key words:} Minimal, Lipschitz, extension, Kirszbraun, harmonious. \\



\section{Introduction}
Let $A$ be a compact subset of $\cR^n$. The best Lipschitz constant of a Lipschitz function $g:A\to \cR^m$ is
\bq\label{eq:lipcond}
\Lip(g,A):= \sup\limits_{x\ne y\in A}\frac{\|g(x)-g(y)\|}{\|x-y\|},
\eq
where $\|.\|$ is Euclidean norm.

When $m=1$, Aronsson in 1967 \cite{Aronsson3} proved the existence of absolutely minimizing Lipschitz extension (AMLE), i.e., a extension $u$ of $g$
satisfying
\bq\label{eq:AMLEope}
\Lip(u;V)= \Lip(u,\partial V),~~ \text{for all}~~ V\subset \subset \cR^n\minus A.
\eq
Jensen in 1993 \cite{Jensen} proved the uniqueness of AMLE under certain conditions.

In this chapter we begin by studying the discrete version of the existence and uniqueness of AMLE for case $m\ge 2$. 

We define the function 
\bq
\lambda(g,A)(x) := \inf\limits_{y\in\cR^m}\sup\limits_{a\in A}\frac{\|g(a)-y\|}{\|a-x\|} \text{ if }x\in \cR^n\backslash A. 
\eq
From Kirszbraun theorem (see \cite{Federer,Kirs}) the function $\lambda(g,A)$ is well-defined and $$\lambda(g,A)(x)\le \Lip(g,A).$$ Moreover, (see \cite[Lemma 2.10.40]{Federer}) for any $x\in \cR^n\minus A$ there exists a unique $y(x)\in \cR^m$ such that 
\bq
\lambda(g,A)(x) = \sup\limits_{a\in A}\frac{\|g(a)-y(x)\|}{\|a-x\|},
\eq
and $y(x)$ belongs to the convex hull of the set
\bqq
B=\{g(z): z\in A \text{ and } \frac{\|g(z)-y(x)\|}{\|z-x\|}=\lambda(g,A)(x)\}.
\eqq
Thus we can define  
\bq\label{def:k} K(g,A)(x) := \left\{ \begin{gathered}
  g(x) \text{ if }x\in A; \hfill \\
  \arg \min\limits_{y\in\cR^m}\sup\limits_{a\in A}\frac{\|g(a)-y\|}{\|a-x\|} \text{ if }x\in \cR^n\backslash A. \\ 
\end{gathered}  \right.\eq
We say that $K(g,A)(x)$ is {\it the Kirszbraun value of $g$ restricted on $A$ at point $x.$} The function $K(g,A)(x)$ is the best extension at point $x$ such that the Lipschitz constant is minimal. We produce a method to compute $\lambda(g,A)(x)$ and $K(g,A)(x)$ in section \ref{numerical}.\\

Let $G=(V,E,\Omega)$ be a connected finite graph with vertices set $V\subset\cR^n$, edges set $E$ and a non-empty set $\Omega\subset V$.
\begin{figure}[ht]
\begin{center}
\includegraphics[scale=0.4]{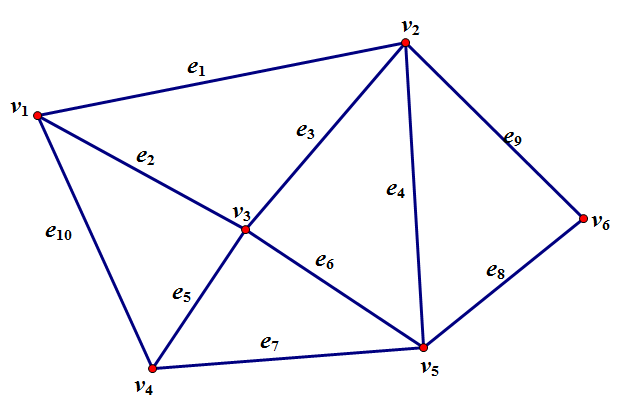}
\end{center}
\caption{A simple picture of graph $G$}
\label{fig:3.1}
\end{figure}

For $x\in V$, we define 
\bq \label{eq:sx}
S(x):=\{y\in V:(x,y)\in E\}
\eq
to be the neighborhood of $x$ on $G$.
\begin{example}
In Figure \ref{fig:3.1} we have  $V=\{v_1,...,v_6\},$ $E=\{e_1,...,e_{10}\},$ $S(v_3)=\{v_1,v_2,v_4,v_5\}.$
\end{example}

Let $f:\Omega\to \R^m.$ We consider the following functional equation with Dirichlet's condition:
\bq\label{eq:kirsz} u(x)=\left\{ \begin{gathered}
   K(u,S(x))(x) ~~\forall x\in V\minus\Omega;\hfill \\
   f(x) ~~\forall x\in \Omega. \hfill \\ 
\end{gathered}  \right.\eq

We say that a function $u$ satisfying (\ref{eq:kirsz}) is a {\it Kirszbraun extension} of $f$ on graph $G.$ This extension is the optimal Lipschitz extension of $f$ on graph $G$ since for any $x\in V\minus \Omega$, there is no way to decrease $\Lip(u,S(x))$ by changing the value of $u$ at $x.$ 

In real valued case $m=1$, the function $K(u,S(x))(x)$ was considered by Oberman \cite{Obe} and he used this function to obtain a convergent difference scheme for the AMLE. Le Gruyer \cite{ELG3} showed the explicit formula for $K(u,S(x))(x)$ as follows
\bq\label{eq:nK(u,S(x))}
K(u,S(x))(x)=\inf\limits_{z\in S(x)}\sup\limits_{q\in S(x)}M(u,z,q)(x),
\eq
where 
\bqq
M(u,z,q)(x):=\frac{\|x-z\|u(q)+\|x-q\|u(z)}{\|x-z\|+\|x-q\|}.
\eqq

Le Gruyer studied the solution of (\ref{eq:kirsz}) on a network where $K(u,S(x))(x)$ satisfying (\ref{eq:nK(u,S(x))}). This solution plays an important role in approximation arguments for AMLE in Le Gruyer \cite{ELG3}. 

The Kirszbraun extension $u$ is a generalization of the solution in the previous works of Le Gruyer for vector valued cases ($m\ge 2$). 
We prove that the tight function introduced by Sheffield and Smart (2012) \cite{She} is a Kirszbraun extension. Therefore, we have the existence of a Kirszbraun extension, but in general Kirszbraun extension maybe not unique.

In the scalar case $m=1$, Le Gruyer \cite{ELG3} defined a network on a metric space $(X,d)$ as follows
\begin{definition}\label{def:netwo} A network on a metric space $(X,d)$ is a couple $(N,U)$ where $N\subset X$ denotes a finite non-empty subset of $\cR^n$ and $U$ a mapping $x\in N\to U(x)\subset N$ which satisfies

{\it (P1)} For any $x\in N$, $x\in U(x)$.

{\it (P2)} For any $x,y\in N$, $x\in U(y)$ iff $y\in U(x)$.

{\it (P3)} For any $x,y\in N,$ there exists $x_1,...,x_{n-1}\in G$ such that $x_1=x,$ $x_n=y$ and $x_i\in U(x_{i+1})$ for $i=1,...,{n-1}.$

{\it (P4)} For any $x\in N$, any $y\in N\minus U(x)$ there exists $z\in U(x)$ such that $d(z,y)\le d(x,y).$
 
\end{definition}

In the above definition, $U(x)$ is called the neighborhood of $x$ on network $(N,U)$. Let $g: A\subset X\to \cR$. In \cite{ELG3} Le Gruyer defined the Kirszbraun extension of $g$ with respect to the network (see \cite[page 30]{ELG3}) and he proved the existence and uniqueness of the Kirszbraun extension of $g$ on the network. In particular, when $X=\cR^n$ equipped with the Euclidean norm, Le Gruyer obtained the approximation for AMLE by a sequence Kirszbraun extensions $(u_n)$ of networks $(N_n ,U_n )$ ($n\in \cN$) having some good properties.

Similarly to Le Gruyer's result about the uniqueness of the Kirszbraun extension on a network, in this chapter we prove the uniqueness of the  Kirszbraun extension $u$ of $f$ on graph $G$ when $m=1$. The graph is more general than the network in some sense since there are many graphs that do not satisfy (P4). Moreover, in the scalar case $m=1$, we produce a simple algorithm which calculates efficiently the value of Kirszbraun extension $u$ in polynomial time. This algorithm is similar to the algorithm produced by Lazarus el al. (1999) \cite{Lazarus} when they calculate the Richman cost function. Assuming Jensen's hypotheses \cite{Jensen}, since this algorithm computes exactly solution of (\ref{eq:kirsz}) and by using the argument of Le Gruyer \cite{ELG3} (the approximation for AMLE by a sequence Kirszbraun extensions $(u_n)$ of networks $(N_n ,U_n )$ ($n\in \cN$)), we obtain a new method to approximate the viscosity solution of Equation $\Delta_\infty u=0$ under Dirichler's condition $f$.

In the above algorithm, the explicit formula of $K(u,S(x))$ in (\ref{eq:nK(u,S(x))}) and the order structure of real number set play important role. The generalization of the algorithm 
to vector valued case ($m\ge 2$) is difficult since we do not know the explicit formula of $K(u,S(x))$ when $m\ge 2$ and $\R^2$ does not have any useful order structure.
Extending the results of the approximation of AMLE to vector valued cases ($m\ge 2$) presents many difficulties which have limited the number of results in this direction, see \cite {ELG2} and the references therein.

\section{The existence of Kirszbraun extension}
In this section, we prove the existence of Kirszbraun extension satisfying Equation (\ref{eq:kirsz}). 

Let $G=(V,E,\Omega)$ be a connected finite graph with vertices set $V\subset\cR^n$, edges set $E$ and a non-empty set $\Omega\subset V$ and let $f:\Omega\to \R^m$. 

We denote $E(f)$ to be the set of all extensions of $f$ on $G$.

Let $v\in E(f)$. The {\it local Lipschitz constant} of $v$ at vertex $x\in V\minus\Omega$ is given by
\bqq
Lv(x):=\sup\limits_{y\in S(x)}\frac{\|v(y)-v(x)\|}{\|y-x\|},
\eqq
where $S(x)$ is neighborhood of $x$ on $G$.
\begin{definition} \footnote{By convention, if $C=\emptyset$ then $\max\limits_{C}=0$.} If $u,v\in E(f)$ satisfy
\bqq
\max\{Lu(x):Lu(x)>Lv(x),x\in V\minus \Omega\}>\max\{Lv(x):Lv(x)>Lu(x),x\in V\minus \Omega\},
\eqq 
then we say that $v$ is {\it tighter} than $u$ on $G.$ We say that $u$ is a {\it tight extension} of $f$ on $G$ if there is no $v$ tighter than $u$. 
\end{definition} 
\begin{theorem}\cite[Theorem 1.2]{She}\label{the1} There exists a unique extension $u$ that is tight of $f$ on $G$. Moreover, $u$ is tighter than every other extension $v$ of $f$. 
\end{theorem}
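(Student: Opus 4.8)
The plan is to establish the three assertions — existence of a tight extension, its uniqueness, and its being tighter than every competitor — in that order, deducing the last two from a single comparison lemma. Throughout I write $\Phi(v)\in\R^{|V\minus\Omega|}$ for the vector whose entries are the numbers $\{Lv(x):x\in V\minus\Omega\}$ arranged in non-increasing order, and I order such vectors lexicographically. For existence I would fix one reference extension $v_0\in E(f)$ (any values on $V\minus\Omega$) and set $M:=\max_{x}Lv_0(x)<\infty$. Since $G$ is connected and $\Omega\ne\emptyset$, every vertex is joined to $\Omega$ by a path, so on the set $\cK:=\{v\in E(f):Lv(x)\le M\ \forall x\in V\minus\Omega\}$ the values $\|v(x)\|$ are uniformly bounded; as the constraints $Lv(x)\le M$ are closed, $\cK$ is a compact subset of $(\R^m)^{V\minus\Omega}$ (the boundary values being frozen). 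The map $v\mapsto\Phi(v)$ is continuous, so minimizing its coordinates one after another over the nested compact sublevel sets yields a lexicographic minimizer $u$. A short computation shows that if $v$ is tighter than $u$, then $\Phi(v)<\Phi(u)$ lexicographically: above the top level where $Lu$ and $Lv$ differ the profiles agree, and at that level $u$ carries strictly more vertices. Hence nothing is tighter than the lexicographic minimizer, i.e. $u$ is a tight extension.

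The heart of the matter is the comparison lemma: \emph{every tight extension $u$ is tighter than every other extension $v$.} Suppose not. Writing $P:=\max\{Lu(x):Lu(x)>Lv(x)\}$ and $Q:=\max\{Lv(x):Lv(x)>Lu(x)\}$, tightness of $u$ forces $P\le Q$, while the failure of ``$u$ tighter than $v$'' forces $Q\le P$; hence $P=Q=:t$. Consider the midpoint $w:=\tfrac12(u+v)\in E(f)$, for which the triangle inequality gives
\bqq
Lw(x)\le\tfrac12 Lu(x)+\tfrac12 Lv(x)\le\max\{Lu(x),Lv(x)\}\qquad(x\in V\minus\Omega).
\eqq
If $t>0$ I would show $w$ is strictly tighter than $u$, a contradiction: on $\{Lu>Lv\}$ one has $Lw<Lu$, so the vertices realizing $P=t$ give $\max\{Lu(x):Lu(x)>Lw(x)\}\ge t$; on $\{Lv>Lu\}$ one has $Lw\le\tfrac12(Lu+Lv)<Lv\le t$, and on $\{Lu=Lv\}$ one has $Lw\le Lu$, so $\max\{Lw(x):Lw(x)>Lu(x)\}<t$. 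These two inequalities say precisely that $w$ is tighter than $u$, which is impossible.

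The only surviving possibility is therefore $t=0$, that is, $Lv\equiv Lu$ with $v\ne u$, and excluding this is where I expect the genuine difficulty. Here the same midpoint estimate forces $Lw\equiv Lu$ (otherwise $w$ is again strictly tighter), and more generally convexity of $s\mapsto Lv_s(x)$ for $v_s:=u+s(v-u)$, together with the equal endpoint values $Lv_0\equiv Lv_1\equiv Lv_{1/2}\equiv Lu$, forces $Lv_s\equiv Lu$ for all $s\in[0,1]$. Strict convexity of the Euclidean norm then pins down, at each interior vertex $x$, the increments along the steepest edges: a maximizing neighbour $y$ must satisfy $u(y)-u(x)=v(y)-v(x)$. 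The obstacle is to upgrade this infinitesimal rigidity to the global conclusion $u=v$: a naive ``follow a steepest edge'' propagation stays inside a level set of $v-u$ and need not reach $\Omega$, so one must invoke the full strength of tightness — via the harmonious/Kirszbraun equation (\ref{eq:kirsz}) and the uniqueness of the Kirszbraun minimizer recorded after (\ref{def:k}) — to run a discrete maximum-principle argument driving the steepest-edge chains issuing from a maximizer of $\|v-u\|$ down to the Dirichlet boundary, where $v-u=0$.

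Once the case $t=0$ is ruled out, the comparison lemma is complete: every $v\ne u$ satisfies $Lv\not\equiv Lu$ and is hence strictly beaten by $u$, which is the asserted dominance. Uniqueness follows at once, since two distinct tight extensions would each have to be tighter than the other. I expect paragraphs one and two to be routine (compactness plus the purely combinatorial bookkeeping of the profile vectors), with the entire weight of the theorem resting on the rigidity-to-boundary step of paragraph three.
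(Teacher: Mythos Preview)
The paper does not prove Theorem~\ref{the1}; it is quoted from \cite[Theorem~1.2]{She} and used as a black box, so there is no in-paper proof to compare your proposal against.

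Judged on its own, your existence argument (lexicographic minimisation of the sorted profile $\Phi$ over a compact set of extensions) and your midpoint argument for the case $P=Q=t>0$ are both correct and standard. The gap is exactly where you locate it, at $t=0$ (that is, $Lu\equiv Lv$ with $u\ne v$), and your sketch does not close it. From strict convexity you correctly obtain that along every edge realising $Lw(x)$ one has $(v-u)(y)=(v-u)(x)$, but --- as you yourself concede --- following one such edge at a time can cycle inside a level set of $v-u$ without ever touching~$\Omega$. The extra input you invoke, the Kirszbraun equation~(\ref{eq:kirsz}) for $u$, is indeed available without circularity (Proposition~\ref{pro:1afd} and the implication ``tight $\Rightarrow$ Kirszbraun'' do not rely on Theorem~\ref{the1}); but it only tells you that $u(x)$ sits in the convex hull of its own active neighbours. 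It gives no control over the arbitrary competitor $v$ and no device to steer the steepest-edge chain toward the boundary. That global propagation step is the substantive content of the Sheffield--Smart proof, and it is not supplied here.
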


\begin{proposition}\label{pro:1afd}
Let $u\in E(f)$. Let $x\in V\minus \Omega$, we define 
\[v(y) = \left\{ \begin{gathered}
  u(y),~~\text{if }y\in V\minus\{x\} ,\hfill \\
  K(u,S(x))(x),~~\text{if } y=x.\hfill \\ 
\end{gathered}  \right.\]
 If $K(u,S(x))(x)\ne u(x)$ then $v$ is tighter than $u.$
\end{proposition}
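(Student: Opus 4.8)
The plan is to exploit that $u$ and $v$ differ only at the single vertex $x$. Consequently the two families of local Lipschitz constants coincide, $Lu(w)=Lv(w)$, for every $w$ that is neither $x$ nor a neighbour of $x$; the only places where they can disagree are $w=x$ and the vertices $w\in S(x)$ (recall $w\in S(x)\iff x\in S(w)$). Writing
\[
A:=\max\{Lu(w):Lu(w)>Lv(w),\ w\in V\minus\Omega\},\qquad
B:=\max\{Lv(w):Lv(w)>Lu(w),\ w\in V\minus\Omega\},
\]
the goal is to prove $A>B$, which is exactly the assertion that $v$ is tighter than $u$. (Note $S(x)\ne\emptyset$ because $G$ is connected and $x\in V\minus\Omega$, so every supremum below is taken over a non-empty set.)

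First I would record the effect of the replacement at $x$ itself. Since $v(a)=u(a)$ for $a\in S(x)$ and $v(x)=K(u,S(x))(x)$, the very definition of the Kirszbraun value gives
\[
Lv(x)=\sup_{a\in S(x)}\frac{\|u(a)-K(u,S(x))(x)\|}{\|a-x\|}=\lambda(u,S(x))(x).
\]
Because $K(u,S(x))(x)$ is the \emph{unique} minimiser of $y\mapsto\sup_{a\in S(x)}\|u(a)-y\|/\|a-x\|$ and, by hypothesis, $u(x)\ne K(u,S(x))(x)$, the value at the non-optimal point $u(x)$ is strictly larger:
\[
Lu(x)=\sup_{a\in S(x)}\frac{\|u(a)-u(x)\|}{\|a-x\|}>\lambda(u,S(x))(x)=Lv(x).
\]
In particular $x$ belongs to the first index set, so $A\ge Lu(x)>Lv(x)$.

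The delicate point, and what I expect to be the main obstacle, is controlling the neighbours $w\in S(x)$: replacing $u(x)$ by $K(u,S(x))(x)$ changes the ratio carried by the edge $\{w,x\}$ and could a priori raise $Lv(w)$ above $Lu(w)$. For such a $w$,
\[
Lv(w)=\max\Bigl\{\sup_{y\in S(w)\minus\{x\}}\frac{\|u(y)-u(w)\|}{\|y-w\|},\ \frac{\|K(u,S(x))(x)-u(w)\|}{\|x-w\|}\Bigr\}.
\]
The first entry is a supremum over a subset of $S(w)$, hence $\le Lu(w)$; so whenever $Lv(w)>Lu(w)$ the maximum must be realised by the second entry. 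But $w\in S(x)$, so $\|K(u,S(x))(x)-u(w)\|/\|x-w\|$ is precisely one of the ratios appearing in the supremum that defines $\lambda(u,S(x))(x)$, whence
\[
Lv(w)=\frac{\|K(u,S(x))(x)-u(w)\|}{\|x-w\|}\le\lambda(u,S(x))(x)=Lv(x).
\]
Thus every vertex contributing to $B$ does so with a value at most $Lv(x)$, and therefore $B\le Lv(x)$ (the convention $\max_\emptyset=0\le Lv(x)$ also covers the case where no neighbour loosens).

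Combining the two estimates yields $B\le Lv(x)<Lu(x)\le A$, so $A>B$ and $v$ is tighter than $u$, as claimed. The whole argument hinges on the single observation of the third paragraph: because $w$ is itself a neighbour of $x$, the enlarged edge ratio cannot exceed the optimal radius $\lambda(u,S(x))(x)$ attained at the Kirszbraun value, so no neighbour can become ``looser'' than the slack $Lv(x)$ that the optimisation already removed at $x$.
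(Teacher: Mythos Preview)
Your proof is correct and follows essentially the same approach as the paper's: both arguments reduce to the chain $B\le Lv(x)<Lu(x)\le A$, obtained by observing that (i) the strict inequality $Lv(x)<Lu(x)$ comes from the uniqueness of the Kirszbraun value, and (ii) any vertex $w$ with $Lv(w)>Lu(w)$ must be a neighbour of $x$ whose new edge ratio is bounded by $Lv(x)$. The paper packages (ii) as the preliminary inequality $Lv(y)\le\max\{Lv(x),Lu(y)\}$ for all $y\in V\setminus\Omega$, proved by the same three-case split you carry out implicitly; the content is identical.
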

\begin{proof}\text{}\\
{\bf *Step 1:} In this step we prove that for any $y\in V\minus \Omega$, we obtain
\bq\label{eq:bt12}
Lv(y)\le \max\{Lv(x),Lu(y)\}.
\eq
Indeed,

*If $y\notin S(x)\cup\{x\}.$ Since $v(y)=u(y)$ and $v(z)=u(z)$ for all $z\in S(y)$, we obtain $$Lv(y)=Lu(y).$$

*If $y=x$. Since $v(x)\ne u(x)$ and $v(x)$ is the Kirszbraun value of $u$ restricted on $S(x)$ at point $x$, we have $$Lv(y)< Lu(y).$$

*If $y\in S(x)$ we have

\bqq
Lv(y)&=&\max\limits_{z\in S(y)}\frac{\|v(z)-v(y)\|}{\|z-y\|}\\
&=& \max\limits_{z\in S(y)\minus\{x\}}\left\{{\frac{\|v(x)-v(y)\|}{\|x-y\|}},\frac{\|u(z)-u(y)\|}{\|z-y\|}\right\}\\
&\le& \max\{Lv(x),Lu(y)\}.
\eqq

Therefore, for any $y\in V\minus \Omega$ we have 
\bqq
Lv(y)\le \max\{Lv(x),Lu(y)\}.
\eqq
{\bf *Step 2:} In this step we prove that $v$ is tighter than $u.$ It means that we need to show that 
\bqq
\max\{Lv(y):Lv(y)>Lu(y),y\in V\minus \Omega\}<\max\{Lu(y):Lu(y)>Lv(y),y\in V\minus \Omega\} 
\eqq
Indeed, if $Lv(y)>Lu(y)$ then from (\ref{eq:bt12}) we have $Lv(y)\le Lv(x)$. Thus
\bq\label{eq:123}
\max\{Lv(y):Lv(y)>Lu(y),y\in V\minus \Omega\}\le Lv(x)
\eq
Since $v(x)\ne u(x)$ and $v(x)$ is the Kirszbraun value of $u$ restricted on $S(x)$ at point $x$, we have 
\bq\label{eq145}
Lv(x)< Lu(x).
\eq
From (\ref{eq:123}) and (\ref{eq145}) we obtain
\bqq
\max\{Lv(y):Lv(y)>Lu(y),y\in V\minus \Omega\}
&\le& Lv(x)\\&<&Lu(x)\\
&\le&\max\{Lu(y):Lu(y)>Lv(y),y\in V\minus \Omega\} .
\eqq
\end{proof}

We obtain the existence of a Kirszbraun extension satisfying Equation (\ref{eq:kirsz}) as a consequence of the following theorem. 

\begin{theorem}
If $u$ is a tight extension of $f$, then $u$ is a Kirszbraun extension of $f$.
\end{theorem}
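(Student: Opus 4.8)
The plan is to argue by contradiction, leveraging Proposition \ref{pro:1afd} as the single decisive ingredient. Suppose $u$ is a tight extension of $f$ but fails to be a Kirszbraun extension. Since $u\in E(f)$, the Dirichlet condition $u(x)=f(x)$ holds automatically for every $x\in\Omega$; hence the only way equation (\ref{eq:kirsz}) can fail is at some interior vertex, i.e.\ there exists $x\in V\minus\Omega$ with $u(x)\ne K(u,S(x))(x)$.

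At such a vertex $x$ I would define $v$ exactly as in the statement of Proposition \ref{pro:1afd}: set $v=u$ on $V\minus\{x\}$ and $v(x)=K(u,S(x))(x)$. Because $x\notin\Omega$, the modified function $v$ still agrees with $f$ on all of $\Omega$, so $v\in E(f)$ and the tightness comparison between $u$ and $v$ is legitimate. Since $K(u,S(x))(x)\ne u(x)$ by assumption, Proposition \ref{pro:1afd} applies directly and yields that $v$ is tighter than $u$.

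This contradicts the hypothesis that $u$ is tight, namely that no extension in $E(f)$ is tighter than $u$. Therefore no such interior vertex can exist, so $u(x)=K(u,S(x))(x)$ for every $x\in V\minus\Omega$; together with the boundary condition $u|_\Omega=f$ this is precisely equation (\ref{eq:kirsz}), and hence $u$ is a Kirszbraun extension of $f$.

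I do not expect any genuine obstacle here: the entire analytic content has been packaged into Proposition \ref{pro:1afd}, and this theorem is essentially its contrapositive combined with the trivial observation that a member of $E(f)$ satisfies the Dirichlet condition by definition. The only point deserving a moment's care is verifying that the competitor $v$ lies in $E(f)$, so that the relation ``tighter than'' is even applicable — but this is immediate, since $v$ and $u$ differ only at the interior vertex $x$ and thus coincide on $\Omega$.
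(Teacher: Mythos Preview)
Your proof is correct and follows essentially the same approach as the paper: assume for contradiction that $u(x)\ne K(u,S(x))(x)$ at some interior vertex, define the competitor $v$ by replacing $u(x)$ with the Kirszbraun value, and invoke Proposition~\ref{pro:1afd} to conclude that $v$ is tighter than $u$, contradicting tightness. Your explicit remark that $v\in E(f)$ (since the modification occurs only at an interior vertex) is a small clarification the paper leaves implicit, but otherwise the arguments are identical.
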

\begin{proof}
Let $u$ be a tight extension of $f$. 
Suppose, by contradiction, that there are some $x\in V\minus \Omega$ such that 
\bq\label{equ:1u}
K(u,S(x))(x)\ne u(x).
\eq
we define 
\[v(y) = \left\{ \begin{gathered}
  u(y),~~\text{if }y\in V\minus\{x\}, \hfill \\
  K(u,S(x)),~~\text{if } y=x.\hfill \\ 
\end{gathered}  \right.\]
By applying Proposition \ref{pro:1afd} we have $v$ tighter than $u$. This is impossible since $u$ is tight of $f$.
\end{proof}


\section{An algorithm to compute Kirszbraun extension when $m=1$}
In this section, let $G=(V,E,\Omega)$ be a connected finite graph, with vertices set $V\subset\cR^n$, edges set $E$ and a non-empty set $\Omega\subset V$. Let $f:\Omega\to \R.$

We recall some properties of Kirszbraun function introduced in (\ref{def:k}) which are useful in the proof of Theorem \ref{theo:chaho}.  
\begin{theorem} \label{lem:obe}Let $S=\{x_1,...,x_n\}\subset\cR^n$ and  $u:S\to \cR$. For each $x\in \cR^n\minus S$, we use the notation $d_i=\|x_i-x\|$, $i=1,...,n$. 

$(a)$ (see \cite[Theorem 5]{Obe}) We have
\bqq
K(u,S)(x)=\frac{d_iu(x_j)+d_ju(x_i)}{d_i+d_j},
\eqq
where $i,j$ are the indexes which satisfy
\bqq
\frac{|u(x_i)-u(x_j)|}{d_i+d_j}=\max\limits_{k,l=1}^n\left\{\frac{|u(x_k)-u(x_l)|}{d_k+d_l}\right\}.
\eqq

(b) (see\cite[Lemma 2.10.40]{Federer}) Let \bq
\lambda(u,S)(x) := \inf\limits_{y\in\cR^m}\sup\limits_{a\in S}\frac{\|u(a)-y\|}{\|a-x\|} \text{ if }x\in \cR^n\backslash S. 
\eq
then the set 
\bqq
B=\left\{u(z): z\in S \text{ and } \frac{\|u(z)-K(u,S)(x)\|}{\|z-x\|}=\lambda(u,S)(x)\right\},
\eqq
is not empty, and $K(u,S)(x)$ belongs to the convex hull of $B$.
\end{theorem}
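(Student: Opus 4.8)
The plan is to treat part (a) as a one-dimensional convex minimization and part (b) as an immediate consequence of the structure of the minimizer. Writing $u_k := u(x_k)$ and recalling that $d_k = \|x_k - x\| > 0$, I would first observe that, by the definition (\ref{def:k}), computing $K(u,S)(x)$ amounts to minimizing over $y \in \cR$ the function $\Phi(y) := \max_{k} \frac{|u_k - y|}{d_k}$. Each term $y \mapsto \frac{|u_k - y|}{d_k}$ is convex and piecewise linear with a single corner at $y = u_k$ and slopes $\pm 1/d_k$; hence $\Phi$ is convex, piecewise linear, and coercive, so the minimum is attained at some $y^* = K(u,S)(x)$ and is characterized by the condition $0 \in \partial\Phi(y^*)$.

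Next I would identify the two relevant indices. Let $A = \{k : \frac{|u_k - y^*|}{d_k} = \Phi(y^*)\}$ be the active set at the minimizer. Since the subdifferential of a maximum of convex functions at $y^*$ is the convex hull of the derivatives of the active functions, the condition $0 \in \partial\Phi(y^*)$ forces $A$ to contain at least one index $i$ with $u_i < y^*$ (contributing slope $+1/d_i$) and one index $j$ with $u_j > y^*$ (contributing slope $-1/d_j$), unless all the $u_k$ coincide, in which case $y^*$ equals the common value and the formula is trivial. For such $i,j$ the active equalities read $\frac{y^* - u_i}{d_i} = \frac{u_j - y^*}{d_j} = \Phi(y^*) =: \lambda$. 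Solving this two-by-two linear system by adding the numerators gives $\lambda = \frac{u_j - u_i}{d_i + d_j}$ and then $y^* = \frac{d_j u_i + d_i u_j}{d_i + d_j}$, which is the asserted formula.

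It remains to identify $i,j$ as a maximizing pair for $\frac{|u_k - u_l|}{d_k + d_l}$, equivalently to establish $\lambda = \max_{k,l} \frac{|u_k - u_l|}{d_k + d_l}$. For the bound $\max_{k,l} \frac{|u_k - u_l|}{d_k + d_l} \le \lambda$ I would use the triangle inequality: for every $y$ and every pair $k,l$ one has $|u_k - u_l| \le |u_k - y| + |y - u_l| \le \Phi(y)(d_k + d_l)$, whence $\frac{|u_k - u_l|}{d_k + d_l} \le \Phi(y)$; taking the infimum over $y$ and then the supremum over $k,l$ gives the claim. The reverse bound is immediate from the pair $(i,j)$ produced above, since $\lambda = \frac{u_j - u_i}{d_i + d_j} = \frac{|u_i - u_j|}{d_i + d_j}$. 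Equality then shows that any maximizing pair yields $K(u,S)(x)$ through the stated formula.

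Finally, part (b) follows from the same analysis in this scalar setting: the active indices $i,j$ satisfy $u_i, u_j \in B$, so $B \neq \emptyset$, and since $d_i, d_j > 0$ the minimizer $y^* = \frac{d_j u_i + d_i u_j}{d_i + d_j}$ is a genuine convex combination of $u_i$ and $u_j$, hence lies in the convex hull of $B$. The main obstacle I anticipate is the careful bookkeeping at the minimizer — namely justifying, via the subdifferential (or a direct left/right slope comparison), that the active set must straddle $y^*$ with strictly opposite signs, together with the degenerate cases of ties among the ratios and of a constant $u$ — rather than any deep difficulty, since both (a) and (b) are ultimately elementary convexity statements.
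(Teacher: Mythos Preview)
Your argument is correct. The convex-analysis approach you outline --- identifying the minimizer of $\Phi(y)=\max_k |u_k-y|/d_k$ via the subdifferential condition $0\in\partial\Phi(y^*)$, extracting an active pair straddling $y^*$, and then verifying the min-max identity $\lambda=\max_{k,l}|u_k-u_l|/(d_k+d_l)$ through the triangle inequality --- is a clean and complete proof of part~(a), and part~(b) in the scalar case does indeed drop out immediately from the same active-pair description.

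The comparison with the paper is simple: the paper does not prove this theorem at all. Both parts are stated with external citations (part~(a) to Oberman, part~(b) to Federer's Lemma~2.10.40) and no argument is given in the text; the result is only invoked later in the proof of Theorem~\ref{theo:chaho}. So your proposal supplies strictly more than the paper does here. One small remark: the statement of part~(b) writes $y\in\cR^m$, which is the general vector-valued form from Federer, whereas your argument (like the rest of the theorem) is confined to the scalar case $m=1$; that is consistent with how the result is actually used in the paper, but you may want to flag explicitly that you are proving the real-valued instance rather than the full Federer lemma.
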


\begin{theorem}\label{theo:chaho}
There is a unique Kirszbraun extension $u$ of $f$ on the graph $G.$ Moreover, the Kirszbraun extension $u$ of $f$ can be calculated in polynomial time.
\end{theorem}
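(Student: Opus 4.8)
The plan is to prove the two assertions of Theorem \ref{theo:chaho} simultaneously, by a ``peeling'' procedure that determines $u$ layer by layer, starting from the regions of steepest slope. Everything rests on one structural observation extracted from the explicit formula in Theorem \ref{lem:obe}(a). Suppose $u$ is a Kirszbraun extension, set $\Lambda := \max_{y\in V\minus\Omega} Lu(y)$, and let $x\in V\minus\Omega$ realize this maximum. Writing $u(x)=\tfrac{d_i u(x_j)+d_j u(x_i)}{d_i+d_j}$ for the maximizing pair $x_i,x_j\in S(x)$ of Theorem \ref{lem:obe}(a), a direct computation gives $\frac{u(x_i)-u(x)}{d_i}=\frac{u(x)-u(x_j)}{d_j}=\Lambda$ with $u(x_i)\ge u(x)\ge u(x_j)$. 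Thus every maximal-slope interior vertex has one strictly uphill and one strictly downhill neighbor at which the slope $\Lambda$ is again attained.

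First I would pin down the global slope. Following an uphill neighbor and iterating, any interior uphill vertex again attains slope $\Lambda$ (its slope is $\ge\Lambda$ because the previous vertex is among its neighbors, and $\le\Lambda$ by maximality), so the uphill chain strictly increases $u$ and, the graph being finite, terminates in $\Omega$; likewise downhill. Concatenating, $x$ lies on a path $p=y_0,\dots,y_k=q$ with $p,q\in\Omega$ along which $u$ increases with constant slope $\Lambda$ relative to edge lengths, so $|f(p)-f(q)|=\Lambda\sum_i\|y_{i+1}-y_i\|\ge \Lambda\,d_G(p,q)$, where $d_G$ is the Euclidean-weighted shortest-path distance. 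Since no extension of $f$ can have maximal slope below $\max_{a,b\in\Omega}|f(a)-f(b)|/d_G(a,b)$, while the tight extension of Theorem \ref{the1} does not exceed it, I conclude
\[ \Lambda=\max_{a,b\in\Omega}\frac{|f(a)-f(b)|}{d_G(a,b)}, \]
a quantity depending only on $f$ and $G$, and the path above is a geodesic realizing it.

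Next I would show the values are forced. For any Kirszbraun extension $u$ and any geodesic $y_0,\dots,y_k$ from $a$ to $b$ realizing $\Lambda$, the chain $|f(a)-f(b)|\le\sum_i|u(y_{i+1})-u(y_i)|\le\Lambda\sum_i\|y_{i+1}-y_i\|=\Lambda\,d_G(a,b)=|f(a)-f(b)|$ is an equality throughout, so every edge slope equals $\Lambda$ with a common sign; hence $u$ is uniquely determined on the set $V_1$ of all vertices lying on a maximal-slope geodesic, and the chain argument shows every interior vertex with $Lu=\Lambda$ lies in $V_1$. I would then adjoin $V_1$ to the boundary, put $\Omega_1=\Omega\cup V_1$ with the determined data, note that $u$ restricted to the rest still solves (\ref{eq:kirsz}) with boundary $\Omega_1$ (the equation at an interior vertex is local), and repeat: the next maximal slope $\Lambda_1=\max_{a,b\in\Omega_1}|u(a)-u(b)|/d_{F_1}(a,b)$, where $d_{F_1}$ is the shortest-path distance with interior vertices in $F_1=V\minus\Omega_1$, satisfies $\Lambda_1<\Lambda$ because no free vertex attains slope $\Lambda$ any longer. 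Iterating produces strictly decreasing slopes and disjoint layers $V_1,V_2,\dots$ on each of which $u$ is forced; since at least one new vertex is fixed per layer, the process halts after at most $|V|$ steps with $u$ everywhere determined, giving uniqueness (existence, and the consistency of the values forced by distinct geodesics meeting at a vertex, being supplied by Theorem \ref{the1} and the previous section). The same peeling is the algorithm: at each stage compute the free-interior geodesic distances $d_{F_k}$ by a shortest-path routine on the Euclidean-weighted graph, form the finitely many ratios to obtain $\Lambda_k$ and its realizing geodesics, assign the forced linearly interpolated values along them, and recurse; with at most $|V|$ layers and a polynomial shortest-path computation per layer, the total cost is polynomial, mirroring the Lazarus et al.\ computation of the Richman cost function. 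The main obstacle I anticipate is the recursion step: verifying carefully that the residual problem is genuinely a smaller instance of (\ref{eq:kirsz}), that the maximal slope strictly drops at each layer so the layers are disjoint and the procedure terminates, and that geodesics passing through already-fixed vertices are split at those vertices so that $d_{F_k}$ captures exactly the next layer. Handling these bookkeeping points rigorously, together with the consistency of the forced values, is where the real work lies.
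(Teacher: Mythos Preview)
Your peeling strategy is the same idea the paper uses: build (or uncover) the extension layer by layer along paths of maximal slope, using Theorem~\ref{lem:obe}(a) to produce matched uphill/downhill neighbors at each maximal-slope interior vertex. The paper implements this by growing a subgraph $G_n=(V_n,E_n,\Omega)$ one \emph{connecting path} at a time and proving the slopes $c_n$ are non-increasing; your version instead fixes the values on all maximal-slope geodesics at once and recurses. Structurally these are the same argument, and your appeal to existence from the previous section is legitimate.

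However, your identification
\[
\Lambda=\max_{a,b\in\Omega}\frac{|f(a)-f(b)|}{d_G(a,b)}
\]
is wrong, and this breaks the base step. Take $V=\{a,b,c\}$, $\Omega=\{a,b\}$, with edges $(a,b),(a,c),(b,c)$ of Euclidean lengths $1,100,100$, and $f(a)=0$, $f(b)=10$. Then $d_G(a,b)=1$ so your formula gives $\Lambda=10$, realized by the direct edge $a$--$b$; but the unique Kirszbraun extension has $u(c)=5$ and $\Lambda=Lu(c)=0.05$. Your ``geodesic realizing $\Lambda$'' is a single boundary--boundary edge with no interior vertex, so $V_1=\emptyset$ and the recursion stalls. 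The error is in the sentence ``no extension of $f$ can have maximal slope below $\max_{a,b\in\Omega}|f(a)-f(b)|/d_G(a,b)$'': edges joining two boundary vertices contribute to $d_G$ but never to $Lu$ at an interior vertex, so this lower bound fails.

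The paper avoids this by tracking \emph{edges} as well as vertices: a connecting path must use edges in $E\setminus E_n$, so a direct $\Omega$--$\Omega$ edge is consumed at stage $1$ without fixing any new vertex, and the next stage looks at genuinely interior paths. You already do the right thing in your recursion (defining $d_{F_k}$ via paths with interior vertices in $F_k$); you need the same restriction at the base step, i.e.\ take $\Lambda$ to be the maximum of $|f(a)-f(b)|$ over the length of paths from $a$ to $b$ whose intermediate vertices lie in $V\setminus\Omega$ (equivalently, the paper's maximal connecting-path slope). With that correction your lower bound follows because every edge on such a path touches an interior vertex and hence has slope $\le\Lambda$, and the rest of your argument goes through. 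You should also note, as the paper does, that the edge set must shrink at each stage even when no new vertex is fixed, to guarantee termination.
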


Before proving Theorem \ref{theo:chaho}, we need the following definition
\begin{definition}
Let $G'=(V',E',\Omega)$ be a subgraph of $G$, i.e. $\Omega\subset V'\subset V$ and $E'\subset E$. Let $u'$ be a Kirszbraun extension of $f$ on $G'$ , a {\it connecting path on $G'$ with respect to $u'$} is a sequence
\bqq
v_0,e_1,v_1,...,e_n,v_n ~~(n\ge 1)   
\eqq
of distinct vertices and edges in $G$ such that

* each $e_i$ is an edge joining $v_{i-1}$ and $v_i$,

* $v_0$ and $v_n$ are in $V'$,

* for $1\le i< n$, $v_i$ is in $V\minus V'$, and

* for $1\le i\le n$, $e_i$ is in $E\minus E'$ 

We define
\bqq
c:=\frac{|u'(v_n)-u'(v_0)|}{\sum\limits_{i=1}^n\|v_i-v_{i-1}\|}.
\eqq
We say that $c$ is the {\it slope} of the connecting path $v_0,e_1,v_1,...,e_n,v_n.$
\end{definition}

\begin{proof}[\bf Proof of Theorem \ref{theo:chaho}]

We construct an increasing sequence of subgraph $G_n=(V_n,E_n,\Omega)$ of $G$ and $u_n$ which is a Kirszbraun extension of $f$ on $G_n$. We finish the algorithm with a Kirszbraun extension $u$ on $G$.\\

{\bf Step 1: Construct an increasing sequence of subgraph}\\

We begin with the trivial subgraph $G_1=(V_1,E_1,\Omega)$ where $V_1=\Omega$, $E_1=\emptyset$ and let $u_1=f$ on $\Omega.$ It is clear that $u_1$ is a Kirszbraun extension of $f$ on $G_1.$ The algorithm then proceeds in stages.

Suppose that after $n$ stages we have  an increasing sequence of subgraph $G_l=(V_l,E_l,\Omega)$ of $G$ and $u_l$ is a Kirszbraun extension of $f$ on $G_l$ for $l=1,...,n$. 

If there are no connecting paths on $G_n$ with respect to $u_n$, we go to step 2.

If there are some connecting paths on $G_n$ with respect to $u_n$. We construct $G_{n+1}$ subgraph of $G$ and $u_{n+1}$ Kirszbraun extension of $f$ on $G_{n+1}$ as follows: 

Find a connecting path $v_0,e_1,v_1,...,e_k,v_k ~~(k\ge 1) $ on $G_n$ with respect to $u_n$ with largest possible slope $c_n$.

Without loss of generality, we label the vertices of the path so that $u_n(v_k)\ge u_n(v_0).$ We define
\bq\label{eq:slope}
  u_{n+1}(x)&:=&\left\{ \begin{gathered}
  u_n(x),~~\forall x\in G_n\hfill \\ 
  u_n(v_0)+c_n\sum\limits_{j=1}^i\|v_j-v_{j-1}\|~~,\text{ if } x=v_i\hfill \text{ for } i=1,...,k-1. \\
\end{gathered}  \right.\\
   V_{n+1}&:=&V_n\cup\{v_1,...,v_{k-1}\}\nn  \\
  E_{n+1}&:=&E_n\cup\{e_1,..,e_k\}\nn   
\eq
We will show that $u_{n+1}$ is a Kirszbraun extension of $f$ on graph $G_{n+1}=(V_{n+1},E_{n+1},\Omega).$

For $x\in V_{n+1}$, let \bqq
S_{i}(x):=\{y\in V_{i}:(x,y)\in E_{i}\} ~~\text{ for } i\in\{1,...,n+1\}.
\eqq
be the neighborhood of $x$ with respect to $G_i.$

{\bf Case 1:} $x\in V_n\minus\{v_0,v_k\}.$ 

We have $S_{n+1}(x)=S_n(x),$  $u_{n+1}(z)=u_n(z)$ for all $z\in S_{n+1}(x)\cup \{x\}$ and $u_{n}(x)=K(u_{n},S_{n}(x))(x)$ since $u_n$ is Kirszbraun of $G_n$ . Thus 
\bqq
u_{n+1}(x)=K(u_{n+1},S_{n+1}(x))(x),~~ \text{ for } x\in V_n\minus\{v_0,v_k\} .
\eqq

{\bf Case 2:} $x\in\{v_1,...,v_{k-1}\}$.

Noting that $S_{n+1}(v_{i})=\{v_{i-1},v_{i+1}\}$ for all $i=1,..., k-1$. Moreover, from (\ref{eq:slope}), we have
\bqq
\frac{u_{n+1}(v_i)-u_{n+1}(v_{i-1})}{\|v_i-v_{i-1}\|}=c_n~~ ,\forall i: 1\le i\le k.
\eqq
Hence
\bqq
u_{n+1}(x)=K(u_{n+1},S_{n+1}(x))(x)~~ \forall x\in\{v_1,...,v_{n-1}\}.
\eqq

{\bf Case 3:} $x\in\{v_0,v_{k}\}.$ 

We need to prove that 
\bq\label{eq:haro}
u_{n+1}(v_0)=K(u_{n+1},S_{n+1}(v_0))(v_0).
\eq
(Proving $u_{n+1}(v_k)=K(u_{n+1},S_{n+1}(v_k))(v_k)$ is similar.)

To see (\ref{eq:haro}), we must show that

\bq\label{eq:haro1}
\sup\limits_{x\in S_{n+1}(v_0)}\frac{|u_{n+1}(x)-u_{n+1}(v_0)|}{\|x-v_0\|}\le\inf\limits_{y\in \cR^m}\sup\limits_{x\in S_{n+1}(v_0)}\frac{|u_{n+1}(x)-y|}{\|x-v_0\|}
\eq

Noting that $u_{n+1}(x)=u_n(x)$ for all $x\in S_n(v_0)\cup\{v_0\},$ $S_{n+1}(v_0)=S_n(v_0)\cup\{v_1\}$ and $c_n=\frac{|u_{n+1}(v_1)-u_{n+1}(v_0)|}{\|v_1-v_0\|}$. Moreover, since $u_n$ is a Kirszbraun extension of $f$ on $G_n$, we have 
\bqq
\sup\limits_{x\in S_{n}(v_0)}\frac{|u_{n}(x)-u_{n}(v_0)|}{\|x-v_0\|}\le\inf\limits_{y\in \cR^m}\sup\limits_{x\in S_{n}(v_0)}\frac{|u_{n}(x)-y|}{\|x-v_0\|}.
\eqq
Thus
\bqq
\sup\limits_{x\in S_{n+1}(v_0)}\frac{|u_{n+1}(x)-u_{n+1}(v_0)|}{\|x-v_0\|}
&=& \sup\limits_{x\in S_{n}(v_0)\cup\{v_1\}}\frac{|u_{n+1}(x)-u_{n+1}(v_0)|}{\|x-v_0\|}\\
&=& \max\limits_{x\in S_{n}(v_0)} \left\{\frac{|u_{n}(x)-u_{n}(v_0)|}{\|x-v_0\|},\frac{|u_{n+1}(v_1)-u_{n+1}(v_0)|}{\|v_1-v_0\|}\right\}\\
&=& \max\left\{\max\limits_{x\in S_{n}(v_0)}\frac{|u_{n}(x)-u_{n}(v_0)|}{\|x-v_0\|},c_n\right\},\\
\eqq
and
\bqq
\max\limits_{x\in S_{n}(v_0)}\frac{|u_{n}(x)-u_{n}(v_0)|}{\|x-v_0\|}
&\le& \inf\limits_{y\in \cR^m}\sup\limits_{x\in S_{n}(v_0)}\frac{|u_{n}(x)-y|}{\|x-v_0\|}\\
&\le& \inf\limits_{y\in \cR^m}\sup\limits_{x\in S_{n+1}(v_0)}\frac{|u_{n+1}(x)-y|}{\|x-v_0\|}.
\eqq

Therefore, to obtain Equation (\ref{eq:haro1}), we need to prove that
\bq \label{ine:haro}
c_n\le \frac{|u_n(x)-u_n(v_0)|}{\|x-v_0\|},
\eq
for some $x\in S_n(v_0).$

Let $\cF$ be the set of slope of connecting paths occurring in the algorithm. Remark that each edges and each vertices entered in our algorithm relate with a slope in $\cF.$ So that, for any $y\in V_n$, there exist some $x\in S_n(x)$ and  $c\in \cF$ such that  
\bq
c= \frac{|u_n(x)-u_n(y)|}{\|x-y\|}.
\eq
From above remark, to see (\ref{ine:haro}), we need to show that the sequence of slope of connecting paths occurring in the algorithm is non-increasing. We show this in our present notation. Suppose that 
\bqq
w_0,f_1,w_1,...,f_m,w_m ~~(m\ge 1)  
\eqq
is a connecting path on $G_{n+1}$ with respect to $u_{n+1}$ with slope $c_{n+1}.$ We need to prove that $c_n\ge c_{n+1}$. We assume without loss of generality that $u_{n+1}(w_0)\le u_{n+1}(w_m)$.

$ \bullet $ If $w_0$ and $w_m$ are both in $V_n$ then the connecting path on $G_{n+1}$ with respect to $u_{n+1}$ is actually the connecting path on $G_{n}$ with respect to $u_n$ . Therefore, since $c_n$ is the largest slope of connecting paths on $G_n$ with respect to $u_n$, we have  $c_{n}\ge c_{n+1}$.

$ \bullet $ If $w_0=v_i$ and $w_m=v_j$ for some $0\le i<j\le k$. We consider the path through the vertices
\bqq
v_0,...,v_{i-1},w_0,...,w_m,v_{j+1},...,v_k.
\eqq
The slope of above path is
\bqq
c=\frac{|u_n(v_k)-u_n(v_0)|}{\sum\limits_{l=1}^{i} {\|v_l-v_{l-1}\|}+\sum\limits_{l=1}^{m} {\|w_l-w_{l-1}\|}+\sum\limits_{l=j+1}^{k} {\|v_l-v_{l-1}\|}}.
\eqq
Since $c_n$ is the largest slope of connecting paths on $G_n$ with respect to $u_n$, we have $c_n\ge c$. Moreover,
\bqq
c_n=\frac{|u_n(v_k)-u_n(v_0)|}{\sum\limits_{l=1}^{k} {\|v_l-v_{l-1}\|}},
\eqq
thus we obtain
\bqq
\sum\limits_{l=1}^{m} {\|w_l-w_{l-1}\|}\ge \sum\limits_{l=i+1}^{j} {\|v_l-v_{l-1}\|}.
\eqq
Hence
\bqq
c_{n+1}=\frac{|u_{n+1}(w_m)-u_{n+1}(w_0)|}{\sum\limits_{k=1}^{m} {\|w_k-w_{k-1}\|}}=\frac{|u_{n+1}(v_j)-u_{n+1}(v_i)|}{\sum\limits_{k=1}^{m} {\|w_k-w_{k-1}\|}}\le \frac{|u_{n+1}(v_j)-u_{n+1}(v_i)|}{\sum\limits_{l=i+1}^{j} {\|v_l-v_{l-1}\|}}=c_n.
\eqq

{\bf Step 2: Completing the algorithm}\\

If there are no connecting paths on $G_n=(V_n,E_n,\Omega)$ with respect to $u_n$. Then each unlabeled vertex $v$ is connected via edges not in $E_n$ to exactly one vertex $w$ of $V_n$. We extend $u_n$ to the point $w$ by putting $u_n(w):=u_n(v).$ This completes the algorithm, and we obtains a Kirszbraun extension of $f$.

Since each stage adds at least one edge, and each stage can be accomplished by one shortest-path search for each pair of labeled vertices, this algorithm is calculated in polynomial time. \\

{\bf Uniqueness}\text{}\\

Let $u$ be the Kirszbraun extension of $f$ defined by the algorithm above and $h$ be another Kirszbraun extension of $f$. Let $v$ be the first vertex added by algorithm such that $u(v)\ne h(v)$ . 

$ \bullet $ If $v$ is added to a subgraph $G'=(V',E',\Omega)$ as part of a connecting path through the vertices
\bqq
v_0,...,v_k,...,v_n
\eqq
with slope  $c$ and $v=v_k$. 

We can assume without loss of generality that $u(v_0)\le u(v_n).$ Let 
\bqq
\cL=\{v_i: 0\le i\le n, h(v_i)\ge u(v_i),h(v_i)-h(v_{i-1})>u(v_i)-u(v_{i-1}) \}
\eqq
We prove that $\cL\ne \emptyset$. Indeed, by contradiction, suppose that $\cL= \emptyset$. Since $u(v_0)=h(v_0)$ and $\cL= \emptyset$ we must have 
\bqq
h(v_1)\le u(v_1).
\eqq
If $h(v_2)>u(v_2)$ then 
\bqq
h(v_2)-h(v_1)>u(v_2)-u(v_1).
\eqq
Hence $v_2\in \cL$. This contradicts with $\cL=\emptyset$. Thus we must have
\bqq
h(v_2)\le u(v_2).
\eqq
By induction, we have 
\bq\label{eq:cmdn}
h(v_i)\le u(v_i)~~ \forall i:0\le i\le k. 
\eq
Since $v=v_k,$ $h(v)\ne u(v)$ and (\ref{eq:cmdn}), we have $h(v_k)< u(v_k)$. Thus if $h(v_{k+1})\ge u(v_{k+1})$ then
\bqq
h(v_{k+1})-h(v_k)>u(v_{k+1})-u(v_k).
\eqq
Hence $v_{k+1}\in \cL$. This contradicts with $\cL=\emptyset$. Thus we must have
\bqq
h(v_{k+1})< u(v_{k+1}).
\eqq
By induction, we have 
\bqq
h(v_i)< u(v_i), ~~\forall k\le i\le n. 
\eqq
But we know that $h(v_n)=u(v_n)$, thus we have a contradiction.
Therefore $\cL\ne\emptyset.$

Let $v_l\in \cL.$ We have
\bq\left\{ \begin{gathered}
   h(v_l)\ge u(v_l);\hfill \\
   h(v_l)-h(v_{l-1})>u(v_l)-u(v_{l-1}).\hfill \\ 
\end{gathered}  \right.\eq
Hence 
\bq\label{eq:contra}
\Delta:=\frac{h(v_l)-h(v_{l-1})}{\|v_l-v_{l-1}\|}> \frac{u(v_l)-u(v_{l-1})}{\|v_l-v_{l-1}\|}=c\ge 0.
\eq

Set 
\bqq
S(x):=\{y\in V, (x,y)\in E\}~~, \text{ for }x\in V.
\eqq
Since $K(h,S(v_l))(v_l)=h(v_l)$, by applying Theorem \ref{lem:obe}, there exists $z_1\in S(v_l)$ such that $$\frac{h(z_1)-h(v_l)}{\|z_1-v_l\|}=\max\{\frac{h(y)-h(v_l)}{\|y-v_l\|}:y\in S(v_l)\}.$$ Thus
\bqq
\frac{h(z_1)-h(v_l)}{\|z_1-v_l\|}\ge \frac{h(v_l)-h(v_{l-1})}{\|v_l-v_{l-1}\|}=\Delta.
\eqq 
We extend {\it path of greatest} $z_1,z_2,...$ such that $z_{j+1}\in S(z_j)$ and 
\bqq
\frac{h(z_{j+1})-h(z_j)}{\|z_{j+1}-z_j\|}=\max\{\frac{h(y)-h(z_j)}{\|y-z_j\|}:y\in S(z_j)\}\ge \Delta.
\eqq
This path must terminate with a 
$z_m\in V'.$ 

Since $\Delta>0$, we have
\bqq
h(z_m)> ...> h(v_l)\ge u(v_l)\ge u(v_0).
\eqq
Thus $z_m\ne v_0.$

Finally, consider the path through the vertices
\bqq
v_0,v_1,...,v_{l},z_1,...,z_m.
\eqq
Set $z_0:=v_l$. The above path is the connecting path on $G'$ with respect to $u$. 

Moreover, $c$ is the largest slope of connecting paths on $V'$ with respect to $u$, and
$$u(v_0)=h(z_0),~~ u(z_m)=h(z_m),~~ h(z_0)=h(v_l)\ge u(v_l),$$
$$~~\frac{h(z_{i+1})-h(z_i)}{\|z_{i+1}-z_i\|} \ge \Delta,~~\frac{u(v_l)-u(v_0)}{\sum\limits_{i=0}^{l-1}\|v_{i+1}-v_i\|}=c, ~~\text{ and } \Delta> c.$$
Thus we have
\bqq
c &\ge& \frac{u(z_m)-u(v_0)}{\sum\limits_{i=0}^{l-1}\|v_{i+1}-v_i\|+\sum\limits_{i=0}^{m-1}\|z_{i+1}-z_i\|}\ge\frac{h(z_m)-h(z_0)+u(v_l)-u(v_0)}{\sum\limits_{i=0}^{l-1}\|v_{i+1}-v_i\|+\sum\limits_{i=0}^{m-1}\|z_{i+1}-z_i\|}\\
&=&\sum\limits_{i=0}^{m-1}\frac{h(z_{i+1})-h(z_i)}{\|z_{i+1}-z_i\|}.\frac{\|z_{i+1}-z_i\|}{\sum\limits_{i=0}^{l-1}\|v_{i+1}-v_i\|+\sum\limits_{i=0}^{m-1}\|z_{i+1}-z_i\|}\\
&+&\frac{u(v_l)-u(v_0)}{\sum\limits_{i=0}^{l-1}\|v_{i+1}-v_i\|}.\frac{\sum\limits_{i=0}^{l-1}\|v_{i+1}-v_i\|}{\sum\limits_{i=0}^{l-1}\|v_{i+1}-v_i\|+\sum\limits_{i=0}^{m-1}\|z_{i+1}-z_i\|}\\
&\ge& \Delta\frac{\sum\limits_{i=0}^{m-1}\|z_{i+1}-z_i\|}{\sum\limits_{i=0}^{l-1}\|v_{i+1}-v_i\|+\sum\limits_{i=0}^{m-1}\|z_{i+1}-z_i\|}
+c.\frac{\sum\limits_{i=0}^{l-1}\|v_{i+1}-v_i\|}{\sum\limits_{i=0}^{l-1}\|v_{i+1}-v_i\|+\sum\limits_{i=0}^{m-1}\|z_{i+1}-z_i\|}\\
&>& c
\eqq
The last inequality is obtained by $\Delta>c$. Thus we have a contradiction.  

$ \bullet $ If $v$ is added during the final step of the algorithm. We call $G'=(V',E',\Omega)$ to be the subgraph of $G=(V,E,\Omega)$ when we finish step 1 in the algorithm. Thus there are no connecting paths on $G'$ with respect to $u$. Therefore,  $v$ is connected via edges not in $E'$ to exactly one vertex $w$ of $V'$. 

We can find the largest connected subgraph $G''=(V'',E'',\Omega)$ satisfying

$$v,w\in V'',V''\cap V'=\{w\},~~\text { and } E''\cap E'=\emptyset.$$

From the definition of $u$, we have 
\bqq
h(w)=u(w)=u(x), ~~\forall x\in V''.
\eqq
Since $u(v)\ne h(v)$ and $h(w)=u(w)=u(v)$, we have $h(w)\ne h(v)$. Therefore, we must have $\sup\limits_{z\in V''}h(z)\ne h(w)$ or $\inf\limits_{z\in V''}h(z)\ne h(w)$. 

Suppose $\sup\limits_{z\in V''}h(z)\ne h(w)$  (we prove similar for the case $\inf\limits_{z\in V''}h(z)\ne h(w)$). Let $v_0\in V''$ such that
$$h(v_0)=\sup\limits_{z\in V''}h(z)\ne h(w).$$
Set 
\bqq
S''(x):=\{y\in V'':(x,y)\in E''\} ~~ \text{, for } x\in V''\minus\{w\},
\eqq 
and
\bqq
S(x):=\{y\in V:(x,y)\in E\} ~~ \text{, for } x\in V\minus\Omega.
\eqq 
Noting that 
\bq\label{eq:s=s''}
S(x)=S''(x),~~\forall x\in V''\minus\{w\}.
\eq

Since $G''$ is a connected graph, there exists a path through the vertices
\bqq
v_0, v_1,...,v_n,w
\eqq 
such that $v_i\in S''(v_{i-1}), \forall i\in\{1,...,n\}$ and $w\in S''(v_n).$

On the other hand, from (\ref{eq:s=s''}) and since $h$ is Kirszbraun extension, we have 
\bqq
h(v_0)=\sup\limits_{z\in V''}h(z)\ge \sup\limits_{z\in S''(v_0)}h(z)=\sup\limits_{z\in S(v_0)}h(z).
\eqq
 Thus applying Theorem \ref{lem:obe} we have 
$$h(v_0)=h(s),~~\forall s\in S(v_0).$$
In particular, we have $h(v_0)=h(v_1)$. By induction, we obtain
\bqq
h(v_0)=h(v_1)=...=h(v_n)=h(w).
\eqq
This contradicts with $h(w)\ne h(v_0)$. 
\end{proof}

\begin{remark}
Assuming Jensen's hypotheses \cite{Jensen}, since this algorithm computes exactly solution of (\ref{eq:kirsz}) and by using the argument of Le Gruyer \cite{ELG3} (the approximation for AMLE by a sequence Kirszbraun extensions $(u_n)$ of networks $(N_n ,U_n )$ ($n\in \cN$)), we obtain a new method to approximate the viscosity solution of Equation $\Delta_\infty u=0$ under Dirichler's condition $f$. 
\end{remark}

\begin{definition}For any $x,y\in V$. There exists a chain $x_1,...,x_n\in V$ such that $x_1=x,x_n=y$ and $x_i\in S(x_{i+1})$ for $i=1,...,n-1.$ To any chain we associate its length $\sum\limits_{i=1}^{n-1}\|x_i-x_j\|$. We define the geodesis metric $d_g$ of Graph $G$ by letting $d_g(x,y)$ be the infimum of the length of chains connecting $x$ and $y$.
\end{definition}
By using induction respect to increasing sequence of subgraph in the algorithm, we obtain the following theorem.
\begin{theorem} Let $u$ be the Kirszbraun extension of $f$. We have
\bqq
\sup\limits_{x,y\in V}\frac {\|u(x)-u(y)\|}{d_g(x,y)}\le \sup\limits_{x,y\in \Omega}\frac{\|f(x)-f(y)\|}{d_g(x,y)}, 
\eqq
and
\bqq
\inf\limits_{z\in \Omega}{f(z)}\le u(x)\le \sup\limits_{z\in \Omega}{f(z)},~~\forall x\in V.
\eqq
\end{theorem}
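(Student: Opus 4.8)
The plan is to prove the two inequalities separately, in both cases exploiting the increasing sequence of subgraphs $G_n$ and extensions $u_n$ built in the algorithm, together with the two facts already established there: the slopes $c_n$ of the successively chosen connecting paths are non-increasing, and the value of an already-labeled vertex is never altered at a later stage. Throughout I write $L:=\sup_{x,y\in\Omega}\frac{|f(x)-f(y)|}{d_g(x,y)}$.

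For the maximum principle I would argue by induction on $n$ that $\inf_{z\in\Omega}f(z)\le u_n(z)\le\sup_{z\in\Omega}f(z)$ for every $z\in V_n$. The base case $u_1=f$ on $V_1=\Omega$ is immediate. In the inductive step a connecting path $v_0,\dots,v_k$ of slope $c_n\ge 0$ is added with (after labeling) $u_n(v_0)\le u_n(v_k)$, and formula (\ref{eq:slope}) makes each new value $u_{n+1}(v_i)=u_n(v_0)+c_n\sum_{j=1}^{i}\|v_j-v_{j-1}\|$ a monotone interpolant lying in $[u_n(v_0),u_n(v_k)]$, hence in $[\inf_{z\in\Omega}f(z),\sup_{z\in\Omega}f(z)]$ by the inductive hypothesis. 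The pendant vertices created in Step 2 merely copy the value of a labeled vertex, so they stay in range as well.

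For the first inequality the key reduction is that it suffices to prove the edgewise bound $|u(a)-u(b)|\le L\|a-b\|$ for every edge $(a,b)\in E$. Granting this, for arbitrary $x,y\in V$ and any chain $x=x_0,\dots,x_p=y$ in $G$ the triangle inequality gives $|u(x)-u(y)|\le\sum_{j}|u(x_{j+1})-u(x_j)|\le L\sum_{j}\|x_{j+1}-x_j\|$, and taking the infimum over all chains yields $|u(x)-u(y)|\le L\,d_g(x,y)$, i.e.\ $\frac{|u(x)-u(y)|}{d_g(x,y)}\le L$. To start the edgewise bound I would note $c_1\le L$: the first connecting path joins two vertices $v_0,v_k\in\Omega$ through new vertices, so its length is at least $d_g(v_0,v_k)$ and its slope is at most $\frac{|f(v_k)-f(v_0)|}{d_g(v_0,v_k)}\le L$; since the slopes are non-increasing, every $c_n\le L$. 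Every edge actually inserted by the algorithm lies on a connecting path and so carries slope exactly $c_n\le L$.

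The main obstacle is the edges of $G$ that the algorithm never inserts: for such an edge the Kirszbraun identity $u(a)=K(u,S(a))(a)$ only bounds things by the local constant $\lambda(u,S(a))(a)$, and estimating that constant directly runs in a circle (it refers back to the geodesic bound one is trying to prove). I would break the circle with the observation that a single unused edge between two already-labeled vertices is itself a connecting path — the case $n=1$ of the definition, with no interior vertices. Consequently, at the moment Step 1 terminates there can be no unused edge joining two labeled vertices, so every such edge has in fact been inserted and therefore has slope $\le L$; and every unlabeled vertex sits in a pendant component attached to a single labeled vertex, on which $u$ is constant, so all its incident edges have slope $0$. Hence every edge of $G$ obeys $|u(a)-u(b)|\le L\|a-b\|$, and the chain argument of the previous paragraph completes the proof.
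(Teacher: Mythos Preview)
Your argument is correct and is exactly the induction on the algorithm's subgraphs that the paper itself indicates (the paper offers no details beyond the single sentence ``By using induction [with] respect to [the] increasing sequence of subgraph[s] in the algorithm''). The two observations you isolate --- that $c_1\le L$ because the first connecting path has both endpoints in $\Omega$ and length at least $d_g(v_0,v_k)$, and that at termination of Step~1 any unused edge between labeled vertices would itself be a length-one connecting path --- are precisely what the induction requires, and your handling of the pendant components in Step~2 is correct as well.
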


\section{Method to find $K(f,S)(x)$ in general case for any $m\ge 1$}\label{numerical}
We fix $S=\{p_1,...,p_N\}\subset\cR^n$ and $f:S\to \cR^m$ to be a Lipschitz function. Let $x\in \cR^n\backslash{S}$. We denote
\bqq
\lambda(f,S)(x):=\inf\limits_{y \in \cR^m} \sup\limits_{i \in \{1,...,N\}}\frac{\|y-f(p_i)\|}{\|x-p_i\|}.
\eqq
By applying Kirszbraun's theorem (see \cite{Federer,Kirs}) we have $\lambda \le \Lip(f,S)$.

In this section, we show a method to compute $\lambda(f,S)(x)$ and $K(f,S)(x)$ given by (\ref{def:k}). 

We recall some results that will be useful in this section.
\begin{lemma}(\cite[Lemma 2.10.40]{Federer}) \label{lemma 2.1234}There exists a unique $y(x)\in \cR^m$ such that 
\bq
\lambda(f,S)(x) = \sup\limits_{a\in S}\frac{\|f(a)-y(x)\|}{\|a-x\|},
\eq
and $y(x)$ belongs to the convex hull of the set
\bqq
B=\{f(z): z\in S \text{ and } \frac{\|f(z)-y(x)\|}{\|z-x\|}=\lambda(f,S)(x)\}.
\eqq
Moreover, from the definition of $K(f,S)(x)$, we have $K(f,S)(x)=y(x)$.
\end{lemma}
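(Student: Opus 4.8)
The plan is to treat the statement as a standard convex-optimization fact about the function
\[
F(y) := \sup_{i\in\{1,\dots,N\}}\frac{\|y-f(p_i)\|}{\|x-p_i\|},\qquad y\in\cR^m,
\]
and to recover the convex-hull description from first-order optimality. First I would record that each map $y\mapsto \|y-f(p_i)\|/\|x-p_i\|$ is convex and continuous, being a Euclidean norm rescaled by the positive constant $1/\|x-p_i\|$ (legitimate because $x\notin S$ forces $\|x-p_i\|>0$), so that $F$, as a finite supremum of such maps, is itself convex and continuous. Since $F(y)\ge \|y-f(p_1)\|/\|x-p_1\|\to\infty$ as $\|y\|\to\infty$, the function $F$ is coercive, and a continuous coercive function on $\cR^m$ attains its infimum; this produces a minimizer $y(x)$ with $F(y(x))=\lambda(f,S)(x)$.

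For uniqueness I would exploit the strict convexity of the Euclidean norm through the equality case of the triangle inequality. Suppose $y_1\ne y_2$ both minimize $F$, write $\lambda:=\lambda(f,S)(x)$ and $y_0:=\tfrac12(y_1+y_2)$. Convexity gives $F(y_0)\le\lambda$, hence $F(y_0)=\lambda$, and I would select an index $i^\ast$ at which this value is attained at $y_0$. Chaining the inequalities
\[
\lambda=\frac{\|y_0-f(p_{i^\ast})\|}{\|x-p_{i^\ast}\|}\le \frac12\Big(\frac{\|y_1-f(p_{i^\ast})\|}{\|x-p_{i^\ast}\|}+\frac{\|y_2-f(p_{i^\ast})\|}{\|x-p_{i^\ast}\|}\Big)\le\lambda
\]
forces both to be equalities. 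The first is equality in $\|a+b\|=\|a\|+\|b\|$ with $a=y_1-f(p_{i^\ast})$ and $b=y_2-f(p_{i^\ast})$, so $a$ and $b$ are nonnegatively proportional; the second gives $\|a\|=\|b\|$, whence $a=b$ and $y_1=y_2$, a contradiction.

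To establish $y(x)\in\mathrm{conv}(B)$, where $B$ collects the data values $f(z)$ for which the supremum defining $F(y(x))$ is active, I would argue by contraposition using a separating hyperplane. If $y(x)\notin\mathrm{conv}(B)$, then since $B$ is finite its convex hull is compact and can be strictly separated from $y(x)$ by some $w\in\cR^m$ with $\langle f(z)-y(x),w\rangle>0$ for every $f(z)\in B$. Moving along $y_t:=y(x)+tw$ and expanding $\|y_t-f(p_i)\|^2=\|y(x)-f(p_i)\|^2-2t\langle f(p_i)-y(x),w\rangle+t^2\|w\|^2$ shows the active ratios strictly decrease for small $t>0$, while the inactive ones remain below $\lambda$ by continuity; together these give $F(y_t)<\lambda$, contradicting minimality. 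Hence $y(x)\in\mathrm{conv}(B)$, and $K(f,S)(x)=y(x)$ is then immediate by comparing the definition (\ref{def:k}) with the uniqueness just proved. The hard part will be the uniqueness step: it is exactly where the strictly convex Euclidean geometry is indispensable (the statement fails for a general norm), so the argument must lean on the triangle-equality case rather than on bare convexity.
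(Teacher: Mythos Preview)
The paper does not supply its own proof of this lemma: it is quoted verbatim as \cite[Lemma~2.10.40]{Federer} and used as a black box, so there is no in-paper argument to compare against.

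Your proof is correct and is essentially the classical one. Existence follows exactly as you say from coercivity and continuity of a finite maximum of convex functions. Your uniqueness argument correctly isolates the role of the Euclidean norm: forcing equality in the triangle inequality at an active index $i^\ast$ gives $y_1-f(p_{i^\ast})$ and $y_2-f(p_{i^\ast})$ nonnegatively proportional with equal norms, hence equal; this is precisely the strict convexity of Euclidean balls, and your remark that the result fails for a general norm is on point. The convex-hull step via a separating hyperplane and a first-order descent direction is also the standard mechanism (and is how Federer argues as well): strict separation of $y(x)$ from the finite set $B$ produces a direction along which all active ratios strictly decrease, while inactive ratios stay below $\lambda$ by continuity, contradicting minimality. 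The identification $K(f,S)(x)=y(x)$ is then immediate from the definition~(\ref{def:k}) together with uniqueness of the minimizer.
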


To compute the value of $K(f,S)(x)$ we need some properties of Cayley-Menger determinant. We recall some definitions and basic results.

Let $x_1,...,x_k\in \cR^n$. We define the Cayley-Menger determinant of $(x_i)_{i=1,...,k}$ as
\bqq\label{eq:gamma123}
\Gamma(x_1,...,x_k):=\det \left( {\begin{array}{*{20}{c}}
  0&1&1&{...}&1 \\ 
  1&0&{{{\left\| {{x_1} - {x_2}} \right\|}^2}}&{...}&{{{\left\| {{x_1} - {x_k}} \right\|}^2}} \\ 
  1&{{{\left\| {{x_2} - {x_1}} \right\|}^2}}&0&{...}&{{{\left\| {{x_2} - {x_k}} \right\|}^2}} \\ 
   \vdots & \vdots & \vdots & \ddots & \vdots  \\ 
  1&{{{\left\| {{x_k} - {x_1}} \right\|}^2}}&{{{\left\| {{x_k} - {x_2}} \right\|}^2}}&{...}&0 
\end{array}} \right).
\eqq
\begin{definition}
A $k-$simplex is a $k-$dimensional polytope which is the convex hull of its $k + 1$ vertices. More formally, suppose the $k + 1$ points $u_0,...,u_k\in\cR^n$ are affinely independent, which means $u_1-u_0,...,u_k-u_0$ are linearly independent. Then the $k-$ simplex determined by them is the set of points
\bqq
C=\{t_0u_0+...+t_ku_k: t_i\ge0,0\le i\le k,\sum\limits^k_{i=0}t_i=1\}.
\eqq
\end{definition}
\begin{example} A 2-simplex is a triangle, a 3-simplex is a tetrahedron.
\end{example}

The $k-simplex$ and the Cayley-Menger determinant have beautiful relations by following theorem:

\begin{theorem} \cite[Lemma 9.7.3.4]{Berger} \label{theo:bergerdet}Let $(x_i)_{i=1,...,k+2}\in\cR^n$ be arbitrary points in k-dimensional Euclidean affine space $X$. Then $\Gamma(x_1,...,x_{k+2})=0.$ A necessary and sufficient condition for $(x_i)_{i=1,...,k+1}$ to be a k-simplex of $X$ is that $\Gamma(x_1,...,x_{k+1})\ne 0.$
\end{theorem}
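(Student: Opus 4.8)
The plan is to reduce both assertions to a single algebraic identity relating the Cayley--Menger determinant to a Gram determinant of difference vectors, and then read off the two claims by elementary linear algebra. Fix the base point $x_1$ and set $y_i := x_i - x_1$ for $i = 1,\dots,m$, so that $y_1 = 0$. Let $G$ denote the Gram matrix of $y_2,\dots,y_m$, i.e. $G_{ij} = \langle y_i, y_j\rangle$ for $i,j \in \{2,\dots,m\}$. The claim I would establish is
\[
\Gamma(x_1,\dots,x_m) = (-1)^m\, 2^{m-1} \det G .
\]
Granting this, the theorem is immediate, since $\det G = 0$ precisely when $y_2,\dots,y_m$ are linearly dependent, that is, precisely when $x_1,\dots,x_m$ are affinely dependent (by the very definition of affine independence used in the paper, with base point $x_1$).

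To prove the identity I would write the Cayley--Menger matrix in bordered block form $A = \begin{pmatrix} 0 & \mathbf 1^\top \\ \mathbf 1 & D\end{pmatrix}$, where $D_{ij} = \|x_i - x_j\|^2$ and $\mathbf 1$ is the all-ones vector of length $m$. The key input is the polarization identity $\|x_i - x_j\|^2 = \|y_i\|^2 + \|y_j\|^2 - 2\langle y_i,y_j\rangle$, which exhibits $D = g\mathbf 1^\top + \mathbf 1 g^\top - 2H$ as a rank-two perturbation of $-2H$, where $H$ is the full Gram matrix of $y_1,\dots,y_m$ and $g_i := \|y_i\|^2$ (so $g_1 = 0$). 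The rank-two part is exactly what the bordering row and column can absorb: subtracting $g_i$ times the border row from row $i$, and then $g_j$ times the border column from column $j$ (operations that preserve the determinant), turns $A$ into $\begin{pmatrix} 0 & \mathbf 1^\top \\ \mathbf 1 & -2H\end{pmatrix}$. Since $y_1 = 0$, the matrix $H$ has vanishing first row and column; isolating the row and column indexed by $x_1$ splits $-2H$ into a zero diagonal entry and the block $-2G$. A cofactor expansion along the $x_1$ row, whose only nonzero entry is a $1$ in the border column, then leaves an upper-triangular block determinant and produces the scalar $(-1)^m 2^{m-1}$ together with the factor $\det G$.

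With the identity in hand, the two parts fall out by dimension counting. For the second assertion, take $m = k+1$: by definition $x_1,\dots,x_{k+1}$ form a $k$-simplex iff $y_2,\dots,y_{k+1}$ are linearly independent, iff $\det G \neq 0$, iff $\Gamma(x_1,\dots,x_{k+1}) \neq 0$. For the first assertion, take $m = k+2$: the $k+1$ vectors $y_2,\dots,y_{k+2}$ lie in the $k$-dimensional direction space of $X$, hence must be linearly dependent, so $\det G = 0$ and therefore $\Gamma(x_1,\dots,x_{k+2}) = 0$.

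The only delicate point is the bookkeeping in the determinant-preserving reduction and the ensuing cofactor expansion; everything afterward is a dimension count. As a sanity check on the constant and the sign I would verify the identity on the base cases $m = 2$ (two points, where a direct expansion gives $\Gamma = 2\|x_1-x_2\|^2$) and $m = 3$ (a triangle, where it gives $\Gamma = -16\,\mathrm{Area}^2$), both of which match $(-1)^m 2^{m-1}\det G$.
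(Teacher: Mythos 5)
Your proposal is correct, but note that the paper does not prove this statement at all: it is imported verbatim from Berger (\emph{Geometry I}, Lemma 9.7.3.4), so there is no in-paper argument to compare against. Judged on its own merits, your proof is the standard and fully rigorous route: the identity $\Gamma(x_1,\dots,x_m) = (-1)^m 2^{m-1}\det G$ is exactly right, and I verified the two delicate steps. First, since the border row is $(0,1,\dots,1)$ and the border column is its transpose, subtracting $g_i$ times the border row from row $i$ and $g_j$ times the border column from column $j$ leaves the border itself unchanged while converting $D_{ij} = g_i + g_j - 2H_{ij}$ into $-2H_{ij}$, so the rank-two part of the polarization decomposition is indeed absorbed without side effects. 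Second, because $y_1 = 0$ kills the row and column of $H$ indexed by $x_1$, the cofactor expansion along that row picks up the sign $(-1)^{2+1}$ against the block-triangular minor with determinant $\det(-2G) = (-2)^{m-1}\det G$, yielding $(-1)^m 2^{m-1}\det G$ as claimed; your sanity checks $\Gamma = 2\|x_1-x_2\|^2$ for $m=2$ and $\Gamma = -16\,\mathrm{Area}^2$ for $m=3$ both match. The remaining reductions are sound: affine independence is base-point independent, so using $x_1$ as base point is consistent with the paper's definition of a simplex (which takes $u_0$ as base), a Gram determinant over $\R$ vanishes precisely when the vectors are linearly dependent, and for $m = k+2$ the $k+1$ difference vectors live in the $k$-dimensional direction space of $X$ and are automatically dependent. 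In short: the argument is complete and is a good self-contained replacement for the external citation, making the paper's use of Theorem \ref{theo:bergerdet} (in deriving Equation (\ref{eq:kfs}) and the degree-two-in-$\lambda^2$ polynomial (\ref{ptcq})) independent of Berger.
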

\begin{lemma} \label{Lemma:twopoint}Let the point $u$ lie in the convex hull of the points $q_0,q_1,...,q_s$ of $\cR^m$. If $u'$ distinct from $u$, then for some $i$: 
\bqq
\|u-q_i\|\le \|u'-q_i\|.
\eqq
\end{lemma}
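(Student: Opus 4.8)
The plan is to argue by contradiction, exploiting the fact that $u$ is a convex combination of the $q_i$ together with the elementary expansion of squared distances. The geometric content is that no single point can be strictly closer to \emph{every} vertex of a convex hull than a point already lying inside that hull.

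First I would suppose the conclusion fails, so that $\|u-q_i\| > \|u'-q_i\|$ for every $i\in\{0,1,\dots,s\}$, and write $u=\sum_{i=0}^{s} t_i q_i$ with $t_i\ge 0$ and $\sum_{i} t_i=1$, which is available precisely because $u$ lies in the convex hull of the $q_i$. Setting $v:=u'-u\neq 0$, I would then expand
\[
\|u'-q_i\|^2-\|u-q_i\|^2=\|v\|^2+2\langle v,\,u-q_i\rangle,
\]
so that the assumed strict inequality $\|u'-q_i\|^2<\|u-q_i\|^2$ rewrites as $\langle v,\,u-q_i\rangle<-\tfrac12\|v\|^2$ for each $i$.

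Next I would weight these inequalities by the convex coefficients: multiplying the $i$-th inequality by $t_i\ge 0$ and summing over $i$, and noting that $\sum_i t_i=1$ forces at least one $t_i>0$, I would obtain the strict inequality $\sum_i t_i\langle v,\,u-q_i\rangle<-\tfrac12\|v\|^2$. The left-hand side then collapses by linearity of the inner product, since $\sum_i t_i\langle v,\,u-q_i\rangle=\langle v,\,u-\sum_i t_i q_i\rangle=\langle v,\,u-u\rangle=0$, yielding $0<-\tfrac12\|v\|^2<0$, which is the desired contradiction because $v\neq 0$.

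The only point requiring any care is the bookkeeping of strictness when weighting by the $t_i$: a term with $t_i=0$ contributes nothing, so I must invoke the normalization $\sum_i t_i=1$ to guarantee that some weight is strictly positive and hence that the strict inequality survives the summation. Beyond this, no genuine obstacle arises; the identity $\sum_i t_i(u-q_i)=0$ is simply a restatement of the hypothesis that $u$ is the given convex combination, and it is precisely what converts the pointwise distance comparisons into the contradiction.
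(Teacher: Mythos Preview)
Your proof is correct. The paper's argument is more geometric: it takes the hyperplane $H$ through $u$ perpendicular to the segment $[u,u']$, observes that since $u\in H$ lies in the convex hull of the $q_i$, at least one $q_i$ must fall in the closed halfspace of $H$ not containing $u'$, and concludes $\|u-q_i\|\le\|u'-q_i\|$ for that $i$. Your approach replaces this separating-hyperplane picture with a direct algebraic computation using the convex-combination identity $\sum_i t_i(u-q_i)=0$. The two are essentially dual formulations of the same idea---your inequality $\langle v,u-q_i\rangle<-\tfrac12\|v\|^2$ says precisely that $q_i$ lies strictly on the $u'$ side of the perpendicular bisector of $[u,u']$---but your version has the advantage of being entirely self-contained and avoiding any appeal to geometric intuition, at the modest cost of the bookkeeping you note about preserving strictness when summing against nonnegative weights.
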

\begin{proof}
Choose $H$ to be the $(m-1)-$ dimension (or hyperplane) through $u$ which is perpendicular to the segment $[u,u']$. Then for at least one value for $i$, $q_i$ must lie in the halfspace of $H$ which does not contain $u'$. Thus we have \bqq
\|u-q_i\|\le \|u'-q_i\|.
\eqq
\end{proof}
\begin{proposition}\label{pro:nume}
Suppose there exist $J\subset\{1,2,...,N\}$,  $f_0$ inside convex hull of $\{f(p_j)\}_{j\in J}$ and $\lambda_0>0$ such that 
\bqq
\|f_0-f(p_j)\|=\lambda_0\|x-p_j\|,~~\forall j\in J
\eqq 
and 
\bqq
\|f_0-f(p_i)\|\le \lambda_0\|x-p_i\|,~~\forall i\in\{1,...,N\},
\eqq
then $\lambda_0=\lambda(f,S)(x)$ and $f_0=K(f,S)(x).$
\end{proposition}
\begin{proof}
We have 
\bqq 
\lambda_0=\sup\limits_{i \in \{1,...,N\}}\frac{\|f_0-f(p_i)\|}{\|x-p_i\|}\ge \inf\limits_{y \in \cR^m} \sup\limits_{i \in \{1,...,N\}}\frac{\|y-f(p_i)\|}{\|x-p_i\|}=\lambda(f,S)(x).
\eqq
On the other hand, for any $y\in\cR^m$, by applying Lemma \ref{Lemma:twopoint} there exists $i\in J$ such that 
\bqq
\|y-f(p_i)\|\ge \|f_0-f(p_i)\|=\lambda_0\|x-p_i\|.
\eqq
Hence 
\bq \label{eq:bdtqt}
\sup\limits_{i \in \{1,...,N\}}\frac{\|y-f(p_i)\|}{\|x-p_i\|}\ge \lambda_0.
\eq
Since Inequality (\ref{eq:bdtqt}) is true for any $y\in \cR^m$, we have $\lambda(f,S)(x)\ge \lambda_0$. Thus$$\lambda(f,S)(x)=\lambda_0.$$
Therefore, we have
\bqq
\lambda(f,S)(x)=\sup\limits_{i \in \{1,...,N\}}\frac{\|f_0-f(p_i)\|}{\|x-p_i\|}.
\eqq
From Lemma \ref{lemma 2.1234} we have $f_0=K(f,S)(x)$.
\end{proof}

\text{}\\
{\bf A method to compute $K(f,S)(x)$}\\
Recall that $f:S\to \cR^m$. By applying Lemma \ref{lemma 2.1234}, we have 
\bqq
\|f(a)-K(f,S)(x)\|\le\lambda(f,S)(x)\|a-x\|,~~\forall a\in S.
\eqq
Moreover,
\bqq
B=\left\{f(a): a\in S \text{ and } \frac{\|f(a)-K(f,S)(x)\|}{\|a-x\|}=\lambda(u,S)(x)\right\},
\eqq
is not empty, and $K(f,S)(x)$ belongs to the convex hull of $B$.

Therefore, there exist $\{f(p_{i_k})\}_{k=1,...,l+1}\subset f(S)$ such that 

$(I)$  $l\le m$, where $m$ is dimension of $\cR^m$;

$(II)$  $\{f(p_{i_k})\}_{k=1,...,l+1}$ is a $l-$simplex. From Theorem \ref{theo:bergerdet}, $\{f(p_{i_k})\}_{k=1,...,l+1}$ is a $l-$simplex to be equivalent to
\bq\label{eq:kfs}
\Gamma(K(f,S)(x),f(p_{i_1})...,f(p_{i_{l+1}}))\ne 0;
\eq

$(III)$  $K(f,S)(x)$ belongs convex hull of $\{f(p_{i_k})\}_{k=1,...,l+1};$

$(IV)$
\bq \label{eq1:kfslame}  
\|K(f,S)(x)-f(p_{i_k})\|=\lambda(f,S)(x) \|x-p_{i_k}\|, ~~\forall k=1,...,l+1.
\eq

$(V)$
\bqq
\|f(a)-K(f,S)(x)\|\le\lambda(f,S)(x)\|a-x\|,~~\forall a\in S.
\eqq

From the above observations, we obtain
\begin{theorem} \label{theo:kirsjfjf}There exist $\{f(p_{i_k})\}_{k=1,...,l+1}\subset f(S)$ $(1\le l\le m$, where $m$ is dimension of $\cR^m$), $f_{i_1i_2...i_{l+1}}\in \cR^m$ and $\lambda_{i_1i_2...i_{l+1}}\in \cR$ satisfying some following properties

$(a)$ $f_{i_1i_2...i_{l+1}}$ inside convex hull of $\{f(p_{i_k})\}_{k=1,...,l+1}$.

$(b)~~
\Gamma(f(p_{i_1}),f(p_{i_2}),...,f(p_{i_{l+1}}))\ne 0.
$

$ (c)~~
\|f_{i_1i_2...i_{l+1}}-f(p_{k})\|=\lambda_{i_1i_2...i_{l+1}} \|x-p_{k}\|, ~~\forall k\in \{i_1,i_2,...i_{l+1}\}.
$

$ (d)~~
\|f_{i_1i_2...i_{l+1}}-f(p_k)\|\le \lambda_{i_1j_2...,i_{l+1}}\|x-p_k\|, ~~\forall k\in\{1,...,N\}.
$

Moreover, from Proposition \ref{pro:nume} we have $f_{i_1i_2...i_{l+1}}=K(f,S)(x)$ and $\lambda_{i_1,i_2...,i_{l+1}}=\lambda(f,S)(x)$.
\end{theorem}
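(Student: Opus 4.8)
The plan is to read off the desired simplex directly from the variational characterization already recorded in Lemma \ref{lemma 2.1234}. Set $K := K(f,S)(x)$ and $\lambda := \lambda(f,S)(x)$, and let $B = \{f(z) : z \in S,\ \|f(z)-K\|/\|z-x\| = \lambda\}$ be the set of active values. Federer's lemma supplies three facts at once: the value $K$ realizes the infimum, so $\|f(a)-K\| \le \lambda\|a-x\|$ for every $a \in S$; every $f(z) \in B$ satisfies the equality $\|f(z)-K\| = \lambda\|z-x\|$; and $K$ lies in the convex hull of $B$. The finite set $B$ sits inside $\cR^m$, so the whole problem reduces to extracting from $B$ an affinely independent family whose convex hull still contains $K$.

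First I would invoke the affinely-independent refinement of Carathéodory's theorem: since $K \in \mathrm{conv}(B)$ with $B \subset \cR^m$ finite, I write $K = \sum_{k=1}^{l+1} t_k f(p_{i_k})$ with $t_k > 0$, $\sum_k t_k = 1$, and $l+1$ minimal. Minimality forces $f(p_{i_1}),\dots,f(p_{i_{l+1}})$ to be affinely independent, for an affine dependence would let me eliminate one term and shorten the representation; affine independence of $l+1$ points in $\cR^m$ then yields $l \le m$. Put $f_{i_1\cdots i_{l+1}} := K$ and $\lambda_{i_1\cdots i_{l+1}} := \lambda$. Condition $(a)$ is the convex combination just produced; condition $(b)$ is exactly the statement that these affinely independent points form an $l$-simplex, which by Theorem \ref{theo:bergerdet} is equivalent to $\Gamma(f(p_{i_1}),\dots,f(p_{i_{l+1}})) \ne 0$; condition $(c)$ holds because each $f(p_{i_k}) \in B$ and the defining equality of $B$ is precisely the weighted equidistance; and condition $(d)$ is the global bound $\|f(a)-K\| \le \lambda\|a-x\|$ valid on all of $S$. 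Finally, feeding $(a)$, $(c)$, $(d)$ with $J = \{i_1,\dots,i_{l+1}\}$, $f_0 = K$, $\lambda_0 = \lambda$ into Proposition \ref{pro:nume} returns $f_{i_1\cdots i_{l+1}} = K(f,S)(x)$ and $\lambda_{i_1\cdots i_{l+1}} = \lambda(f,S)(x)$, closing the loop.

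The only genuinely delicate point is the passage from ``$K$ lies in a convex hull'' to ``$K$ lies in a simplex with affinely independent vertices,'' that is, the sharp form of Carathéodory rather than its crude version; once that is in hand, $(b)$ is immediate through the Cayley--Menger dictionary of Theorem \ref{theo:bergerdet}, and the remaining conditions are verbatim restatements of Federer's lemma. I would also dispatch at the outset the degenerate case $\lambda = 0$ (equivalently $f$ constant on $S$), which is excluded by the hypothesis $\lambda_0 > 0$ in Proposition \ref{pro:nume}: there $B = f(S)$ and $K$ is the common value, so the claim is trivial. When $\lambda > 0$ one cannot have $l = 0$, since $l = 0$ would give $K = f(p_{i_1}) \in B$ and hence $0 = \|f(p_{i_1})-K\| = \lambda\|p_{i_1}-x\|$ with $x \ne p_{i_1}$, a contradiction; thus $\lambda > 0$ forces $l \ge 1$, as required.
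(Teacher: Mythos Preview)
Your argument is correct and follows essentially the same route as the paper: the paper also derives the theorem directly from Federer's Lemma~\ref{lemma 2.1234} by extracting from the active set $B$ an affinely independent subfamily whose convex hull contains $K(f,S)(x)$, then reading off conditions (I)--(V) and invoking Proposition~\ref{pro:nume}. Your version is in fact more careful---you explicitly name Carath\'eodory's theorem for the extraction step and dispose of the degenerate cases $\lambda=0$ and $l=0$, which the paper leaves implicit.
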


Therefore, to compute the value of $\lambda(u,S)(x)$ and $K(u,S)(x)$ , we need to find $\{f(p_{i_k})\}_{k=1,...,l+1}\subset f(S)$, $f_{i_1i_2...i_{l+1}}\in \cR^m$ and $\lambda_{i_1i_2...i_{l+1}}\in \cR$ satisfying the conditions $(a)$,$(b)$,$(c)$,$(d)$. We can do that step by step as follows\\

{\bf *Step 1:} For all $i,j\in \{1,...,N\},(i\ne j) .$
Let 
\bqq
f_{ij}&:=&\frac{\|x-p_j\|}{\|x-p_i\|+\|x-p_j\|}f(p_i)+\frac{\|x-p_i\|}{\|x-p_i\|+\|x-p_j\|}f(p_j);\\
\lambda_{ij}&:=&\frac{\|f(p_i)-f(p_j)\|}{\|x-p_i\|+\|x-p_j\|}.
\eqq

We have $f_{ij}$ inside convex hull of $\{f(p_i),f(p_j)\}$ and 
\bqq
\|f_{ij}-f(p_k)\|=\lambda_{ij}\|x-p_k\|,~~\text{ for }k\in\{i,j\} .
\eqq
Test the following condition
\bq
\|f_{ij}-f(p_k)\|\le \lambda_{ij}\|x-p_k\|, ~~\forall k\in\{1,...,N\}
\eq
If $(i,j)$ satisfies the above condition, then from Proposition \ref{pro:nume} we have $f_{ij}=K(f,S)(x)$ and $\lambda_{ij}=\lambda(f,S)(x)$. We finish. If there is no $(i,j)\in \{1,...,N\},(i\ne j)$ that satisfies the above condition, then we go to step 2.

{\bf *Step 2:} For all $(i,j,k)\in \{1,...,N\} \times \{1,...,N\} \times \{1,...,N\}.$
Test the following condition 
\bq \label{eqrqt1}
\Gamma(f(p_{i}),f(p_{j}),f(p_{k}))\ne 0.
\eq
Let $A$ is the set of all $(i,j,k)$ that satisfies (\ref{eqrqt1}). We consider a $(i,j,k)\in A$. Thus from Theorem \ref{theo:bergerdet} we have 

$\bullet$ $\{f(p_{i}),f(p_{j}),f(p_{k})\}$ is $2-$simplex.

$\bullet$ For any $f_{ijk}$ inside convex hull of $\{f(p_i),f(p_j),f(p_k)\}$ we have
\bqq
\Gamma(f_{ijk},f(p_{i}),f(p_{j}),f(p_{k}))= 0.
\eqq

We consider the following equations
\bqq\left\{ \begin{gathered}\label{eq:thnf}
   \Gamma(f_{ijk},f(p_{i}),f(p_{j}),f(p_{k}))= 0; \\
   \|f_{ijk}-f(p_{l})\|=\lambda_{ijk} \|x-p_{l}\|, ~~\forall l\in \{i,j,k\}; \\
\end{gathered}  \right.\eqq
We replace $\|f_{ijk}-f(p_{l})\|$ by $\lambda_{ijk} \|x-p_{l}\|$ into the  equation 
\bqq
\Gamma(f_{ijk},f(p_{i}),f(p_{j}),f(p_{k}))= 0.
\eqq
We obtain that
\bqq
0&=&\Gamma(f_{ijk},f(p_{i}),f(p_{j}),f(p_{k}))\\
&=&\det \left( {\begin{array}{*{20}{c}}
  0& 1 & 1&1&1 \\ 
  1 & 0 & {{{\left\| {{f_{ijk}} - {f_i}} \right\|}^2}}&{{{\left\| {{f_{ijk}} - {f_j}} \right\|}^2}}&{{{\left\| {{f_{ijk}} - {f_k}} \right\|}^2}} \\ 
  1 & {{{\left\| {{f_i} - {f_{ijk}}} \right\|}^2}} & 0&{{{\left\| {{f_i} - {f_j}} \right\|}^2}}&{{{\left\| {{f_i} - {f_k}} \right\|}^2}} \\ 
  1 & {{{\left\| {{f_j} - {f_{ijk}}} \right\|}^2}}& {{{\left\| {{f_j} - {f_i}} \right\|}^2}}&0&{{{\left\| {{f_j} - {f_k}} \right\|}^2}} \\ 
  1 & {{{\left\| {{f_k} - {f_{ijk}}} \right\|}^2}}& {{{\left\| {{f_k} - {f_i}} \right\|}^2}}&{{{\left\| {{f_k} - {f_j}} \right\|}^2}}&0 
\end{array}} \right)\\
&=&\det\left( {\begin{array}{*{20}{c}}
  0 & 1& 1&1&1 \\ 
  1& 0 & {{\lambda _{ijk}}^2{{\left\| {x - {p_i}} \right\|}^2}}&{{\lambda _{ijk}}^2{{\left\| {x - {p_j}} \right\|}^2}}&{{\lambda _{ijk}}^2{{\left\| {x - {p_k}} \right\|}^2}} \\ 
  1& {{\lambda _{ijk}}^2{{\left\| {x - {p_i}} \right\|}^2}} & 0&{{{\left\| {{f_i} - {f_j}} \right\|}^2}}&{{{\left\| {{f_i} - {f_k}} \right\|}^2}} \\ 
  1 & {{\lambda _{ijk}}^2{{\left\| {x - {p_j}} \right\|}^2}} & {{{\left\| {{f_j} - {f_i}} \right\|}^2}}&0&{{{\left\| {{f_j} - {f_k}} \right\|}^2}} \\ 
  1 & {{\lambda _{ijk}}^2{{\left\| {x - {p_k}} \right\|}^2}} & {{{\left\| {{f_k} - {f_i}} \right\|}^2}}&{{{\left\| {{f_k} - {f_j}} \right\|}^2}}&0 
\end{array}} \right)\\
&=&a(x)\lambda^4+b(x)\lambda^2+c(x),
\eqq
where $a(x),b(x),c(x)$ are function only depending on $x$ and initial data  $x_l,$  ${f(p_{l})}$ for $l\in\{i,j,k\}$. 

By solving the equation
\bq\label{eq1:soga}
a(x)\lambda_{ijk}^4+b(x)\lambda_{ijk}^2+c(x)=0,
\eq
we obtain that $\lambda_{ijk}$ is a positive real root of the above polynomial. It maybe that Equation (\ref{eq1:soga}) have no any positive real root. In this case, we consider another $(i',j',k')\in A$ until Equation (\ref{eq1:soga}) with respect to $(i',j',k')$ have a positive real root. We call $L$ is the set of all positive real root of equation (\ref{eq1:soga}).

Let $\lambda_{ijk}\in L$. We find $f_{ijk}$ by solving the equations  
\bq\label{eq2:12}
\|f_{ijk}-f(p_{l})\|=\lambda_{ijk} \|x-p_{l}\|, ~~\forall l\in \{i,j,k\}.
\eq
After that, we test the condition $f_{ijk}$ in convex hull of $\{f(p_l)\}_{l\in\{i,j,k\}}$, and test the following condition
\bq\label{eq2:a12}
\|f_{ijk}-f(p_l)\|\le \lambda_{ijk}\|x-p_l\|, ~~\forall l\in\{1,...,N\}.
\eq

If we has a $\lambda_{ijk}\in L$ such that $f_{ijk}$ in convex hull of $\{f(p_l)\}_{l\in\{i,j,k\}}$ satisfying Equations (\ref{eq2:12}) and Inequalities (\ref{eq2:a12}) then from Proposition \ref{pro:nume} we have $f_{ijk}=K(f,S)(x)$ and $\lambda_{ijk}=\lambda(f,S)(x)$. We finish. If there is no $(i,j,k)\in A$ that satisfies the above conditions, then we go to step 3.

{\bf *Step 3:} By the similar way as step 2 for $(i,j,k,l)$, $(i,j,k,l,h),...$ until we can find a $(i_1,...,i_k)\subset \{1,...,N\}$ such that $f_{i_1,i_2...i_k}$ and $\lambda_{i_1j_2...i_k}$ satisfying some following properties

$(a)$ $f_{i_1i_2...i_k}$ inside convex hull of $\{f(p_{i_n})\}_{n=1,...,k}$

$(b)~~
\Gamma(f(p_{i_1}),f(p_{i_2}),...,f(p_{i_{k}}))\ne 0.
$

$ (c)~~
\|f_{i_1i_2...i_k}-f(p_{l})\|=\lambda_{i_1i_2...i_k} \|x-p_{l}\|, ~~\forall l\in \{i_1,i_2,...i_k\}.
$

$ (d)~~
\|f_{i_1i_2...i_k}-f(p_l)\|\le \lambda_{i_1j_2...,i_k}\|x-p_l\|, ~~\forall l\in\{1,...,N\}
$

By applying Proposition \ref{pro:nume}, we obtain $f_{i_1i_2...i_k}=K(f,S)(x)$ and $\lambda_{i_1,i_2...,i_k}=\lambda(f,S)(x)$.

\begin{remark}
By applying theorem \ref{theo:kirsjfjf}, this method terminates when $k=l+1\le m+1,$ where $m$ is dimension of $\cR^m.$ 
\end{remark}
\begin{remark} In step 3, when we solve $f_{i_1,i_2...i_k}$ by considering the equation 
\bqq
\Gamma(f_{i_1,i_2...i_k},f(p_{i_1}),f(p_{i_2}),...,f(p_{i_{k}}))= 0,
\eqq
by replacing $\|f_{i_1i_2...i_k}-f(p_{l})\| $ by $\lambda_{i_1i_2...i_k} \|x-p_{l}\|$, for $ l\in \{i_1,i_2,...i_k\},$ this equation is equivalent to 
\bq\label{ptcq}
a(x)\lambda_{i_1i_2...i_k}^4+b(x)\lambda_{i_1i_2...i_k}^2+c(x)=0,
\eq
where $a(x),b(x),c(x)$ are function only depending on $x$ and initial data  $x_l,$  ${f(p_{l})}$ for $l\in\{i_1,...,i_k\}$. The polynomial $a(x)\lambda_{i_1i_2...i_k}^4+b(x)\lambda_{i_1i_2...i_k}^2+c(x)$, in fact, is $2-$degree polynomial with variable $\lambda=\lambda_{i_1i_2...i_k}^2$. Therefore, we can solve Equation (\ref{ptcq}) very fast to obtain exactly the value of $\lambda_{i_1i_2...i_k}$.
\end{remark}


\end{document}